\theoremstyle{plain}
\numberwithin{equation}{section}
\newtheorem{theorem}{Theorem}[section]
\newtheorem{lemma}[theorem]{Lemma}
\newtheorem{proposition}[theorem]{Proposition}
\newtheorem{corollary}[theorem]{Corollary}
\newtheorem{definition}[theorem]{Definition}
\theoremstyle{remark}
\newtheorem{remark}[theorem]{Remark}
\newtheorem*{claim*}{Claim}
\newtheorem*{example*}{Example}
\newtheorem{example}[theorem]{Example}
\newtheorem*{remark*}{Remark}
\newcommand{\R}{\mathbf{R}}
\newcommand{\Rn}{\R^n}
\newcommand{\U}{\mathcal{U}}
\newcommand{\V}{\mathcal{V}}
\newcommand{\dom}{\mathop{\rm Dom}}
\newcommand{\intr}{\mathop{\rm int}}
\newcommand\bexp{\hbox{$b$-Exp}}
\newcommand\Bzero{{\bf (B0)}}
\newcommand\Bone{{\bf (B1)}}
\newcommand\Btwo{{\bf (B2)}}
\newcommand\Bthree{{\bf (B3)}}
\newcommand\AthreeW{{(A3w)}}
\newcommand\BthreeU{{\bf (B3u)}}
\newcommand\BthreeS{{\bf (B3s)}}
\newcommand\cl{\overline}
\newcommand\X{X}
\newcommand\Y{Y}
\newcommand\tb{{\tilde b}}
\newcommand\tbs{{\tilde b^*}}
\newcommand\bs{{b^*}}
\newcommand\tc{{\tilde c}}
\newcommand\ur{{u_\emptyset}}
\newcommand\tu{{\tilde u}}
\newcommand\N{{\bf N}}
\newcommand\spt{{\rm spt}\thinspace}
\newcommand\e{{\epsilon}}
\renewcommand\i{{\infty}}
\newcommand\tf{{\tilde f}}
\newcommand\tL{{\tilde L}}
\newcommand\ta{{\tilde a}}
\newcommand\tY{{\tilde Y}}
\newcommand\zero{{\bf 0}}
\newcommand\tU{{\tilde U}}
\newcommand\Prob{{\mathscr P}}
\newcommand\dummy{}
\begin{document}

\title[When is multidimensional screening a convex program?]
{When is multidimensional screening a convex program?$^*$
}

\author{Alessio Figalli$^\dagger$, Young-Heon Kim$^\ddag$ and Robert J. McCann$^\S$
}




\date{\today}

\thanks{
{\em Journal of Economic Literature Classification Codes:} C61, D82, D42, H21, J42. \\ \\
{\em Keywords:} principal-agent, asymmetric information, monopoly, nonlinear pricing,
price discrimination, multidimensional signalling, screening,
social welfare maximization under budget constraint, optimal taxation, incentive compatibility,
mechanism design, exclusion, bunching, robustness, private / imperfect / incomplete information \\
\\ $^*$The authors are 
grateful to
the Institute for Pure and Applied Mathematics at UCLA and
the Institut Fourier at Grenoble
for their generous hospitality during various stages of this work.
[RJM]'s research was supported in part by NSERC grant 217006-08 and
NSF grant DMS-0354729. [YHK] is supported partly by
NSF grant DMS-0635607 through the membership at
Institute for Advanced Study, Princeton NJ and NSERC
discovery grant 371642-09. Any opinions, findings and conclusions or
recommendations expressed in this material are those of authors and do not reflect the
views of the Natural Sciences and Engineering Research Council of Canada (NSERC) or
of the US National Science Foundation (NSF).
\hfill\copyright 2009 by the authors.
\\ \\ $^\dagger$Department of Mathematics, University of Texas, Austin TX 78712 USA
{\tt figalli@math.utexas.edu}
\\ $^\ddag$Department of Mathematics, University of British Columbia, Vancouver BC V6T 1Z2 Canada
{\tt yhkim@math.ubc.ca}
\\ $^\S$Corresponding author: Department of Mathematics, University of Toronto, Toronto, Ontario Canada M5S 2E4
{\tt mccann@math.toronto.edu}.
}

\subjclass[2000]{91B24, 90B50, 90C25,  49N30, 58E17, 35Q80}

\begin{abstract}
A principal wishes to transact business with a multidimensional distribution of agents
whose preferences are known only in the aggregate.  Assuming 
a twist (= generalized Spence-Mirrlees single-crossing) hypothesis
{\dummy and that agents can choose only pure strategies},
we identify a structural condition on the preference
$b(x,y)$ of agent type $x$ for product type $y$ --- and on the principal's costs $c(y)$ --- which is
necessary and sufficient for reducing the profit maximization problem faced by the principal
to a convex program.  This is a key step toward making the principal's problem
theoretically and computationally tractable;  in particular, it allows us to derive uniqueness
and stability of the principal's optimum strategy {--- and similarly of the strategy
maximizing the expected welfare of the agents when the principal's profitability is constrained.}
We call this condition non-negative cross-curvature: it is also
(i) necessary and sufficient to guarantee convexity of the set of $b$-convex functions,
(ii) invariant under reparametrization of agent and/or product types by diffeomorphisms,  and
(iii) a strengthening of Ma, Trudinger and Wang's necessary and sufficient condition
{(A3w)}
for continuity of the correspondence between an exogenously prescribed distribution
of agents and of products.
We derive the persistence of economic effects such as the desirability for a monopoly
to establish prices so high they effectively exclude a positive fraction of its potential
customers, in nearly the full range of non-negatively cross-curved models.
\end{abstract}

\maketitle
\tableofcontents

\section{Introduction}
\label{S:introduction}

The principal-agent paradigm provides a microeconomic framework
for modeling non-competitive decision problems which must be made in the face of
informational asymmetry.  Such problems range from monopolist nonlinear pricing
\cite{MussaRosen78} \cite{Spence80}
\cite{GoldmanLelandSibley84}
\cite{Wilson93} \cite{Armstrong96}
and product line design (``customer screening'')
{\cite{RothschildStiglitz76} \cite{MaskinRiley84} \cite{RochetChone98},
to optimal taxation \cite{Mirrlees71} \cite{Mirrlees76},
labour market signalling and contract theory \cite{Spence73} \cite{Spence74} \cite{AraujoGottliebMoreira07},
regulation of  monopolies \cite{BaronMyerson82} \cite{McAfeeMcMillan87} \cite{LewisSappington88} \cite{LaffontTirole93}
\cite{Armstrong99} including public utilities  \cite{Roberts79} \cite{BrownSibley86},
and mechanism design \cite{GreenLaffont77} \cite{McAfeeMcMillan88} \cite{MonteiroPage98}.}
A typical example would be the problem faced by a monopolist who wants to market
automobiles $y \in \Y$ to a population of potential buyers (``agents'') $x \in \X$.
Knowing the preferences $b(x,y)$ of buyer $x$ for car $y$,  the relative frequency $d\mu(x)$
of different buyer types in the population,  and the cost $c(y)$ she
incurs in manufacturing car type $y$,  the principal needs to decide which
products (or product bundles) to manufacture and how much to charge for each of
them, so as to maximize her profits.

In the simplest models, {e.g. \cite{Spence73} \cite{RothschildStiglitz76},}
there are only a finite number of product possibilities
(e.g.\ with air conditioning, or without) and a finite number of buyer types
(e.g.\ rich, middle-class, and poor);  or possibly a one-dimensional continuum of product
possibilities (parameterized, say,  by quality) and of agent types
(parameterized, say, by income)
{\cite{Mirrlees71} \cite{Spence74} \cite{MussaRosen78} \cite{BaronMyerson82}.}
Of course,  real cars depend on more than
one parameter --- fuel efficiency, comfort, options, reliability, styling, handling and safety,
to name a few --- as do car shoppers, who vary in wealth, income, age, commuting needs,
family size, personal disposition, etc.  Thus realistic modeling requires multidimensional
type spaces $\X \subset \R^m$ and $\Y \subset \Rn$ as in \cite{McAfeeMcMillan88} \cite{Wilson91}
\cite{Mirrlees96} \cite{RochetStole03} \cite{Basov05}  \cite{DeneckereSeverinov09p}.
Although such models can often be reduced to optimization problems in the calculus of
variations \cite{Carlier01} \cite{Basov01}, in the absence of convexity they remain
dauntingly difficult to analyze.  Convexity --- whether manifest or hidden --- rules out critical
points other than global minima,  and is often the key to locating and characterizing {\dummy
optimal strategies} either numerically or theoretically.  The purpose of the present article is to
determine when convexity is present,  assuming the dimensions $m=n$ of the agent and product
type spaces coincide.


An archetypal model was addressed by Wilson \cite{Wilson93},
Armstrong \cite{Armstrong96}, and Rochet and Chon\'e \cite{RochetChone98}.
A particular example from the last of these studies
makes the simplifying hypotheses $\X = \Y = [0,\infty\mathclose[^n$, $c(y) = |y|^2/2$, and
$b(x,y) = \langle x, y \rangle$.  By assuming this {\em bilinearity} of buyer
preferences,   Rochet and Chon\'e were able to show that the principal's problem
can be reduced to a {\dummy quadratic} minimization over the set
of non-negative convex functions
--- itself a convex set.  Although the convexity constraint makes this variational
problem non-standard, for buyers distributed uniformly throughout
the unit square, they exploited a combination of
theoretical and computational analysis to show a number of results
of economic interest.  Their most striking conclusion was that the
profit motive alone leads the principal to discriminate between
three different types of buyers:  (i) low-end customers whom she
will not market cars to, because --- as Armstrong had already discovered ---
making cars affordable to this segment of the market would
cost her too much of her mid-range and high-end profits;  (ii) mid-range customers,  whom she will
encourage to choose from a one-parameter family of affordably-priced
compromise vehicles; (iii) high-end customers,  whom she will use
both available dimensions of her product space to market expensive
vehicles individually tailored to suit each customer's desires.
Whether or not such {\em bunching} phenomena are robust is an unanswered question
of considerable interest which --- due to their specificity to particular preference
functions --- the techniques of the foregoing authors remain unable to address.
The possibility of non-robustness was highlighted in \cite{Basov05};
below we go further to suggest which specific perturbations of the preference function $b(x,y)$
are most likely to yield robust results.  On the other hand, our conclusions confirm Armstrong's
assertion that what he called {\em the desirability
of exclusion} is a very general phenomenon in the models we study
(Theorem~\ref{T:Armstrong}).
This exclusion however, is not generic when the dimensions
of the type and allocation spaces differ \cite{DeneckereSeverinov09p}:
Deneckere and Severinov gave necessary and
sufficient conditions for exclusion when $(m,n)=(2,1)$.

For general preferences $b(x,y)$,  the principal's problem can be reformulated  as a minimimization problem over the space of $b$-convex functions
(Definition \ref{D:b-convex}), according to Carlier \cite{Carlier01}.
Such functions generally form a compact but non-convex set,  which prevented Carlier from deducing
much more than the existence of an optimal strategy for the principal
--- a result which can
also be obtained using the method of Monteiro and Page \cite{MonteiroPage98};
(for related developments see 
Basov \cite{Basov05} or Rochet and Stole \cite{RochetStole03}).
{\dummy Our present purpose
is to identify conditions on the agent preferences which guarantee convexity of this
feasible set (Theorem~\ref{T:convex set U}).  In the setting we choose,  the conditions we find
will actually be necessary as well as sufficient for convexity;  this necessity
imparts a significance to these conditions even if they appear unexpected
or unfamiliar.} If, in addition, the principal's manufacturing cost $c(y)$ is
$\bs$-convex, for $\bs(y,x) := b(x,y)$,
the principal's problem becomes a convex program which renders it much more
amenable to standard theoretical and computational techniques.  Although the resulting problem
retains the complexities of the Wilson, Armstrong, and Rochet and Chon\'e's models,
we are able to deduce new results
which remained inaccessible until now,  such as conditions guaranteeing uniqueness
{(Theorem~\ref{T:uniqueness}) and stability (Corollary~\ref{C:stability})
of the principal's optimum strategy.  The same considerations and results apply also to the
problem of maximimizing the total welfare of the agents under the constraint that it remain
possible for the principal to operate without sustaining a loss (Remark \ref{R:SWM under profitability constraint}).}

{The initial impetus for this study emerged from discussions with Ivar Ekeland.}
RJM is pleased to express his gratitude to Ekeland for introducing him to the principal-agent
problem in 1996,  and for anticipating already at that time that
it ought to be tackled using techniques from the mathematical theory of optimal transportation.
This approach was exploited by {Carlier \cite{Carlier01} in his doctoral thesis,}
following earlier works by Rochet \cite{Rochet85} \cite{Rochet87} and Rochet and
Chon\'e \cite{RochetChone98},  and was recently extended to a different
but related class of problems by Buttazzo and Carlier \cite{ButtazzoCarlier09p}.
We are grateful to Giuseppe Buttazzo and Guillaume Carlier also,
for stimulating discussions.


\section{Hypotheses: the basic framework}
\label{S:the model}

As in Ma, Trudinger and Wang's work concerning the smoothness of optimal mappings \cite{MaTrudingerWang05},
let us assume the buyer preferences satisfy the following hypotheses.
Let $\cl \X$ denote the closure of any given set $X\subset \R^n$, and
for each $(x_0,y_0) \in \cl \X \times \cl \Y$ assume:\\
\Bzero\ $b \in C^4\big(\cl \X \times \cl \Y\big)$, where
$\X \subset \Rn$ and $\Y \subset \Rn$ are open and bounded; \\
\Bone\ (bi-twist)
$\left.\begin{array}{c}
        y \in \cl \Y \longmapsto D_x b(x_0,y) 
\cr     x \in \cl \X \longmapsto D_y b(x,y_0) 
        \end{array}\right\}$ are 
        diffeomorphisms onto their ranges; \\
\Btwo\ (bi-convexity) 
$\left.\begin{array}{l}
        X_{y_0} := D_y b(\X,y_0) 
\cr     
        Y_{x_0} := D_x b(x_0, \Y) \cr
\end{array}\right\}$
are convex subsets of $\Rn$.\\
Here the subscript $x_0$ serves as a reminder that $Y_{x_0}$
denotes a subset of the cotangent space  $T^*_{x_0} \X {= \Rn}$
to $\X$ at $x_0$.
Note \Bone\ is strengthened form of the multidimensional
generalization \cite{Ruschendorf91} \cite{Gangbo95} \cite{Levin99}
of the Spence-Mirrlees single-crossing condition
expressed in R\"uschendorf, in Gangbo, and in Levin.
It asserts the marginal utility  of buyer type $x_0$ in equation \eqref{marginal utility}
determines the product he selects uniquely and smoothly,  and similarly that buyer type
who selects product $y_0$ will be a well-defined smooth function of $y_0$ and the
marginal cost of that product;
\Bone\ is much less restrictive than the generalized single crossing condition proposed by
McAfee and McMillan~\cite{McAfeeMcMillan88}, since the iso-price curves in the latter context
become hyperplanes, effectively reducing the problem to a single dimension. We also assume\\
\Bthree\ (non-negative cross-curvature)
\begin{equation}\label{MTW}
\frac{\partial^4}{\partial s^2 \partial t^2}\bigg|_{(s,t)=(0,0)} b(x(s),y(t)) \ge 0
\end{equation}
for each 
curve
$t \in[-1,1] \longmapsto (D_y b(x(t),y(0)),D_x b(x(0),y(t)))$ 
forming an affinely parameterized line segment in $\cl \X_{y(0)} \times \cl \Y_{x(0)} \subset \R^{2n}$.
If the inequality \eqref{MTW} becomes strict whenever 
$x'(0)$ and $y'(0)$ 
are non-vanishing, 
we say the preference function $b$ is {\em positively cross-curved},
and denote this by \BthreeU.

\begin{remark}[{\dummy Mathematical lineage}]
Condition \Bthree\ can alternately be defined as in Lemma \ref{L:t-convex DASM}
using Definition \ref{D:b-exp};  the convexity asserted by that lemma may appear more
intuitive and natural than \Bthree\ from point of view of applications.
Historically, non-negative cross-curvature arose
as a strengthening of Trudinger and Wang's
criterion \AthreeW\ guaranteeing smoothness of optimal maps in the Monge-Kantorovich
transportation problem \cite{TrudingerWang07p};  unlike us,
they require (\ref{MTW}) only if, in addition,
\begin{equation}\label{nullity}
\frac{\partial^2}{\partial s \partial t}\bigg|_{(s,t)=(0,0)} b(x(s),y(t)) = 0.
\end{equation}
Necessity of Trudinger and Wang's condition for continuity was shown by
Loeper \cite{Loeper07p}, who (like \cite{KimMcCann07p} \cite{Trudinger06})
also noted its covariance and some of its relations to {the geometric notion of}
curvature.
Their condition relaxes a hypothesis proposed with Ma \cite{MaTrudingerWang05},
which required strict positivity of (\ref{MTW}) when (\ref{nullity}) holds.
The strengthening considered here was first studied in a
different but equivalent form
by Kim and McCann,  where both the original and the modified
conditions were shown to
correspond to pseudo-Riemannian sectional curvature conditions induced by
buyer preferences on $\X \times \Y$,  thus highlighting their invariance under reparametrization
of either $\X$ or $\Y$ by diffeomorphism; see Lemma 4.5 of \cite{KimMcCann07p}.
Other variants and refinements
of Ma, Trudinger, and Wang's 
condition have been proposed and investigated
by Figalli and Rifford \cite{FigalliRifford08p} and
Loeper and Villani \cite{LoeperVillani08p} for different purposes at about the same time.

Kim and McCann showed non-negative cross-curvature
guarantees tensorizability of condition
{{\bf (B3)}},
which is useful for building examples of preference functions which satisfy it
\cite{KimMcCann08p}; in suitable coordinates, it guarantees convexity of each
$b$-convex function, as they showed with Figalli \cite{FigalliKimMcCann};
see Proposition \ref{P:convexity}.
Hereafter we show,  in addition, that it is necessary and sufficient
to guarantee convexity of the set $\V^b_{\cl Y}$ of $b$-convex functions.
A variant on the sufficiency was observed simultaneously and independently from us
in a different context by Sei (Lemma 1 of \cite{Sei09p}), who
was interested in the function $b(x,y) = -d^2_{S^n}(x,y)$, and
used it to give a convex parametrization of a family of statistical
densities he introduced on the round sphere $\X=\Y=S^n$.
\end{remark}

\section{Results concerning the principal-agent problem}

A mathematical concept of central relevance to us is encoded in the following definition.

\begin{definition}[$b$-convex]\label{D:b-convex}
A function $u:\cl{\X} \longmapsto \R$ 
is called $b$-convex if $u=(u^\bs)^b$,  where
\begin{equation}
\label{b-transform}
v^b(x) = \sup_{y \in \cl \Y} b(x,y) - v(y)
\quad {\rm and} \quad
u^\bs(y) = \sup_{x \in \cl \X} b(x,y) - u(x).
\end{equation}
In other words,  if $u$ is its own second $b$-transform, i.e.
a supremal convolution (or generalized Legendre transform) of some
function $v:\cl{\Y} \longmapsto \R \cup \{ +\infty\}$ with $b$.
The set of $b$-convex functions will be denoted by $\V^b_{\cl \Y}$.
Similarly, we define the set $\U^{\bs}_{\cl \X}$ of
$\bs$-convex functions to consist of those $v: \cl{\Y} \longmapsto \R$
satisfying $v=(v^b)^\bs$.
\end{definition}


Although some authors permit $b$-convex functions to take the value $+\infty$,
our hypothesis \Bzero\ ensures $b$-convex functions are Lipschitz continuous and thus
that the suprema defining their $b$-transforms are finitely attained.
Our first result is  the following.


\begin{theorem}[$b$-convex functions form a convex set]
\label{T:convex set U}
Assuming $b:\X \times \Y \longmapsto \R$ satisfies \Bzero--\Btwo,
hypothesis \Bthree\ becomes necessary and sufficient for the convexity of the
set $\V_{\cl\Y}^b$ of $b$-convex functions on $\cl\X$.
\end{theorem}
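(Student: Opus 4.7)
The plan is to argue both implications via the pointwise $b$-subgradient characterization of $b$-convexity: thanks to \Bzero, every $u \in \V^b_{\cl\Y}$ is Lipschitz, and for such $u$, being $b$-convex is equivalent to the existence at each $x_0 \in \cl\X$ of a $b$-subgradient $y_0 \in \cl\Y$ satisfying $u(x) \geq u(x_0) + b(x,y_0) - b(x_0,y_0)$ for all $x \in \cl\X$.

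\emph{Sufficiency.} Assume \Bthree, and fix $u_0, u_1 \in \V^b_{\cl\Y}$, $t \in [0,1]$, $x_0 \in \cl\X$. Let $y_0, y_1 \in \cl\Y$ be $b$-subgradients of $u_0, u_1$ at $x_0$. \Btwo\ makes the path $q(s) := (1-s)D_x b(x_0,y_0) + s D_x b(x_0,y_1)$ lie in $\cl Y_{x_0}$, and \Bone\ lifts it to a $b$-segment $s \mapsto y(s)$ with $D_x b(x_0, y(s)) = q(s)$. Adding the two support inequalities with weights $1-t$ and $t$, the claim that $y(t)$ is a $b$-subgradient of $u_t := (1-t)u_0 + tu_1$ at $x_0$ reduces to
\[
(1-t)\bigl[b(x,y_0)-b(x_0,y_0)\bigr] + t\bigl[b(x,y_1)-b(x_0,y_1)\bigr] \;\geq\; b(x,y(t)) - b(x_0,y(t))
\]
for every $x \in \cl\X$; this is the chord-above-graph inequality for $s \mapsto b(x,y(s)) - b(x_0,y(s))$, and its convexity in $s$ along every $b$-segment, for every $x$, is precisely the equivalent ``$t$-convex'' reformulation of \Bthree\ furnished by Lemma~\ref{L:t-convex DASM}. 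Hence $u_t \in \V^b_{\cl\Y}$, so $\V^b_{\cl\Y}$ is convex.

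\emph{Necessity.} Suppose \Bthree\ fails at some $(x_0, y_0)$ along a $b^*$-segment $x(s)$ at $y_0$ with $x(0)=x_0$ and a $b$-segment $y(s)$ at $x_0$ with $y(0)=y_0$: $\partial_s^2 \partial_t^2 b(x(s),y(t))|_{(0,0)} < 0$. Truncating the $b$-segment at $y_1 := y(\varepsilon)$ for $\varepsilon > 0$ small and reparametrising $[0,\varepsilon]$ to $[0,1]$, the $C^4$ regularity in \Bzero\ renders the quadratic form $\alpha(s) := x'(0)^\top D_{xx} b(x_0, y(s)) x'(0)$ strictly concave on $[0,1]$. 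Define the $b$-affine (hence $b$-convex) test functions $u_i(x) := b(x, y_i) - b(x_0, y_i)$, $i=0,1$. Suppose for contradiction that $u_t := (1-t)u_0 + tu_1$ is $b$-convex for some $t \in (0,1)$, with $b$-subgradient $\hat y \in \cl\Y$ at $x_0$. Differentiating the support inequality at $x_0$ and applying \Bone\ forces $\hat y = y(t)$, so the support inequality along $x(s)$ reads $H(s,t) \geq 0$ with
\[
H(s,t) := (1-t)\bigl[b(x(s),y_0) - b(x_0,y_0)\bigr] + t\bigl[b(x(s),y_1) - b(x_0,y_1)\bigr] - \bigl[b(x(s),y(t)) - b(x_0,y(t))\bigr].
\]
But $H(0,t) \equiv 0$; the $b$-segment identity $(1-t)D_x b(x_0,y_0) + tD_x b(x_0,y_1) = D_x b(x_0,y(t))$ makes $\partial_s H(0,t) \equiv 0$ and kills the $x''(0)$-contribution to $\partial_s^2 H(0,t)$, yielding $\partial_s^2 H(0,t) = (1-t)\alpha(0) + t\alpha(1) - \alpha(t)$, strictly negative on $(0,1)$ by concavity of $\alpha$. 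Taylor expansion then forces $H(s,t) < 0$ for $s \neq 0$ small, the required contradiction.

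The principal obstacle is the sufficiency: it turns on Lemma~\ref{L:t-convex DASM}, which upgrades the pointwise inequality \Bthree\ into convexity of $s \mapsto b(x,y(s)) - b(x_0,y(s))$ all along the $b$-segment and for every $x \in \cl\X$. This global-in-$s$, global-in-$x$ strengthening is where non-negative cross-curvature does its real work. By contrast, the necessity is a short Taylor expansion in which the built-in affine structure of the $b$-segment cancels every lower-order contribution, exposing the cross-curvature coefficient as the leading term.
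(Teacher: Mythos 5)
Your proof is correct and follows essentially the same route as the paper's: both directions hinge on Lemma~\ref{L:t-convex DASM}, and your sufficiency argument matches the paper's almost verbatim (the ``$b$-subgradient at each point'' criterion is just the paper's ``supremum of mountains'' characterization restated). For necessity you opt for a self-contained Taylor expansion of $H(s,t)$, which re-derives Lemma~\ref{lemma:tangent} inline (the step forcing $\hat y = y(t)$) and replaces the paper's appeal to the converse direction of Lemma~\ref{L:t-convex DASM} with a direct second-order computation --- the same mathematics, spelled out rather than cited.
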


To understand the relevance of this theorem to economic
theory,
let us recall a mathematical formulation of the principal-agent problem
based on \cite{Carlier01} and  \cite{Rochet85} \cite{Rochet87}.
In this context,  each product $y \in \cl \Y$ costs the principal
$c(y)$ to manufacture,  and she is free to market this product to
the population $\cl\X$ of agents at any lower semicontinuous price $v(y)$
that she chooses.
She is aware that product $y$ has utility $b(x,y)$ to agent $x \in \cl \X$,
and that in response to any price menu $v(y)$ she proposes,
each agent will compute his indirect utility
\begin{equation}\label{indirect utility}
u(x) = v^b(x) := \max_{y \in \cl\Y} b(x,y) - v(y),
\end{equation}
and will choose to buy a product $y_{b,v}(x)$ which attains the maximum,
meaning $u(x)=b(x,y_{b,v}(x)) - v(y_{b,v}(x))$.
However,  let us assume that there is a distinguished point
$y_\emptyset \in \cl Y$ representing the null product, {which
the principal is compelled to offer to agents at zero profit,
\begin{equation}\label{null constraints}
v(y_\emptyset)= c(y_\emptyset), 
\end{equation}
either because both quantities vanish (representing the null transaction),
or because, as in \cite{ButtazzoCarlier09p}, there is a competing supplier
or regulator from whom the agents can obtain this product at price $c(y_\emptyset)$.}
In other words, $\ur(x) := b(x,y_\emptyset)-c(y_\emptyset)$
acts as the reservation utility of agent $x \in X$,
below which he will reject the principal's offers and decline to participate, whence $u \ge \ur$.
The map $y_{b,v}:\cl \X \longmapsto \cl \Y$ from agents to products they select
will not be continuous except possibly if the price menu $v$ is $\bs$-convex; when $y_{b,v}(x)$
depends continuously on $x\in\cl \X$
we say $v$ is {\em strictly} $\bs$-convex.

Knowing $b$, $c$ and a (Borel) probability measure $\mu$ on $\X$ --- representing
the relative frequency of different types of agents in the population ---
the principal's problem is to decide which lower semicontinuous
price menu $v:\cl \Y \longmapsto \R \cup \{+\infty\}$
maximizes her profits,  or equivalently, minimizes her net losses:
\begin{equation}\label{primitive principal's problem}
\int_{\X}
[ c(y_{b,v}(x)))- v(y_{b,v}(x))] d\mu(x).
\end{equation}
Note the integrand vanishes \eqref{null constraints}--\eqref{primitive principal's problem}
 for any agent $x$ who elects not to participate
(i.e., who chooses the null product $y_\emptyset \in \cl \Y$).


For absolutely continuous distributions of agents,
 --- or more generally if $\mu$ vanishes on Lipschitz hypersurfaces ---
it is known that the principal's losses
\eqref{primitive principal's problem} depend on $v$ only through the indirect utility $u=v^b$,
an observation which can be traced back to Mirrlees \cite{Mirrlees71} in one dimension
and Rochet \cite{Rochet85} more generally;  see also Carlier \cite{Carlier01}.
This indirect utility $u \ge \ur$ is $b$-convex, due to the well-known
identity $((v^b)^{b^*})^b=v^b$; see for instance Exercise 2.35 at page 87 of \cite{Villani03}.
Conversely, the principal can design any $b$-convex function
$u \ge \ur$ that she wishes simply by choosing price strategy $v= u^\bs$.
Thus,  as detailed below, the principal's problem can be reformulated as a
minimization problem \eqref{principal's reformulation} on the set
$\U_0:= \{u \in \V^b_{\cl \Y} \mid u \ge \ur\}$.
Under hypotheses \Bzero--\Bthree,  our Theorem \ref{T:convex set U}
shows the set $\V^b_{\cl \Y}$ of such utilities $u$ to be convex, in the usual sense.
This represents substantial progress, even though
the minimization problem (\ref{primitive principal's problem}) still depends nonlinearly
on $v=u^{\bs}$.  If, in addition,  the principal's cost $c(y)$ is a
$\bs$-convex function, then Proposition \ref{P:convexity}
and its corollary show her minimization problem
\eqref{primitive principal's problem} becomes a convex
functional of $u$ on $\U_0$,  so the principal's problem reduces to a convex program.
Necessary and sufficient conditions for a minimum can in principle then be expressed using
Kuhn-Tucker type conditions,  and numerical examples could be solved using standard algorithms.
However we do not do this here:  unless $\mu$ is taken to be a
finite combination of Dirac masses,  the infinite dimensionality of the convex set $\V^b_{\cl\Y}$
leads to functional analytic subtleties even for the bilinear preference function
$b(x,y) = \langle x, y \rangle$, which have only been resolved with partial success
by Rochet and Chon\'e in that case \cite{RochetChone98} \cite{Carlier02}.
If the $\bs$-convexity of $c(y)$ is {\em strict} however,
or if the preference function is positively cross-curved \BthreeU,
we shall show the principal's program has enough strict convexity to yield
unique optimal strategies for both the principal and the agents
in a sense made precise by Theorem~\ref{T:uniqueness}.  These optimal
strategies represent a Stackelberg (rather than a Nash) equilibrium,  in the sense that no party
has any incentive to change his or her strategies,  given that the
principal must commit to and declare her strategy before the agents select theirs.

Of course, it is of practical interest that the principal be able to anticipate
not only her optimal price menu $v: \cl \Y \longmapsto \R \cup \{+\infty\}$ ---
also known as the {\em equilibrium} prices ---  but the corresponding distribution of goods
which she will be called on to manufacture. This can be represented as a Borel probability measure
$\nu$ on $\cl Y$,  which we call the {\em optimal production measure}.  It quantifies
the relative frequency of goods to be produced, and is the image
of $\mu$ under the agents' best response function
$y_{b,v}:\cl \X \longmapsto \cl \Y$ to the principal's optimal strategy $v$.
This image $\nu = (y_{b,v})_\#\mu$  is a Borel probability measure on $\cl \Y$
known as the {\em push-forward} of $\mu$ by $y_{b,v}$, and is defined by the formula
\begin{equation}\label{push-forward}
\nu(W) := \mu[y_{b,v}^{-1}(W)]
\end{equation}
for each $W \subset \Y$.
Theorem \ref{T:uniqueness} asserts the optimal production measure
$\nu$ is unique and the optimal price menu $v$ is uniquely determined $\nu$-a.e.;
the same theorem gives a sharp lower bound for $v$ throughout $\cl\Y$.
If the convex domain $X_{y_\emptyset}$ is strictly convex and
the density of agents is Lipschitz continuous on $\X$,
Theorem \ref{T:Armstrong} goes on to assert that these prices will
be high enough to drive a positive fraction of agents out of the market,
extending Armstrong's desirability of exclusion  \cite{Armstrong96}
to a rich class of multidimensional models. Thus the
goods to be manufactured and their prices are uniquely determined at equilibrium, and the principal
can price the goods she prefers not to trade arbitrarily high but not arbitrarily low.
Theorem \ref{T:uniqueness} goes on to assert that the optimal strategy $y_{b,v}(x)$
is also uniquely determined for $\mu$-almost every agent $x$ by $b,c$ and $\mu$,
for each Borel probability measure $\mu$ on $\X$.
Apart from Theorem \ref{T:Armstrong},
these conclusions apply
to singular and discrete measures as well as to continuous measures $\mu$,
assuming the tie-breaking conventions of Remark~\ref{R:tie-breaker}
are adopted whenever $\mu$ fails to vanish on each Lipschitz hypersurface.

A number of examples of preference functions $b(x,y)$ which satisfy our
hypotheses are developed in
\cite{FigalliRifford08p} \cite{KimMcCann07p} \cite{KimMcCann08p} 
\cite{LeeLi09p} \cite{LeeMcCann09p} \cite{Loeper08p} \cite{MaTrudingerWang05} \cite{TrudingerWang07p}.
Here we mention just three:

\begin{example}
\label{E:linear}
For single dimensional type and allocation spaces $n=1$,
hypotheses \Bone--\Btwo\ are equivalent to asserting
that the preference function $b(x,y)$ be defined on a product of two intervals
where its cross-partial derivatives
$b_{xy}$ do not vanish.   Positive cross-curvature \BthreeU\ asserts that $D^2_{xy} b$ in turn satisfies a
Spence-Mirrlees condition, by having positive cross-partial derivatives:  $D^2_{xy}(D^2_{xy}b) > 0$.
\end{example}

\begin{example}
\label{E:bilinear}
The bilinear preference function $b(x,y) = x \cdot y$ of Armstrong, Rochet and Chon\'e
satisfies \Bzero--\Bthree\ provided only that $X,Y \subset \Rn$ are convex bodies.  In this
case 
$b$- and $\bs$-convexity coincide with
ordinary convexity.  Thus Theorem \ref{T:uniqueness} asserts that any strictly convex
manufacturing cost $c(y)$ leads to
unique optimal strategies for the principal and for $\mu$-almost every agent.
This uniqueness is well-known for absolutely continuous measures $d\mu \ll d$vol \cite{RochetChone98},
and Carlier and Lachand-Robert have {extended Mussa and Rosen's differentiability result
$u \in C^1(\cl X)$ to $n \ge 1$ in that case \cite{CarlierLachand-Robert01} \cite{MussaRosen78},}
but the uniqueness of optimal strategies
under the tie-breaking rules described in Remark \ref{R:tie-breaker}
may be new results when applied, for example, to discrete distributions $\mu$ concentrated on
finitely many agent types.
\end{example}

\begin{example}\label{E:convex product perturbation}
{Ma, Trudinger and Wang's} perturbation $b(x,y) = x \cdot y + F(x) G(y)$ of the bilinear
preference function is non-negatively cross-curved \Bthree\ provided
$F\in C^4\big(\cl X\big)$ and $G\in C^4\big(\cl Y\big)$ are
both convex 
\cite{MaTrudingerWang05} \cite{KimMcCann07p};
it is positively cross-curved if the convexity is strong,  meaning both
$F(x) - \epsilon |x|^2$ and $G(y)-\epsilon |y|^2$ remain convex for some $\epsilon>0$.
It satisfies \Bzero--\Bone\ provided $\sup_{x \in \X} |DF(x)| < 1$ and
$\sup_{y \in \Y} |DG(y)| < 1$,  and \Btwo\ if the convex domains $X$ and $Y \subset \Rn$ are
sufficiently convex,  meaning all principal curvatures of these domains are sufficiently large
at each boundary point \cite{MaTrudingerWang05}.  On the other hand,
$b(x,y) = x \cdot y + F(x) G(y)$ will violate \Bthree\ if $D^2F(x_0)>0$ holds but $D^2G(y_0) \ge 0$
fails at some $(x_0,y_0) \in \cl \X \times \cl \Y$.
\end{example}

In the next section we formulate the results mathematically.
Let us first highlight one implication of our results
concerning robustness of the phenomena observed by Rochet and Chon\'e.
Their bilinear function $b(x,y) = x \cdot y$ lies on the boundary of the set of
non-negatively cross-curved preference functions, since its cross-curvature
\eqref{MTW} vanishes identically.  Our results show non-negative cross-curvature
\Bthree\ to be a necessary and sufficient condition for the principal-agent
problem to be a convex program:  the feasible set $\V^b_{\cl\Y}$ becomes non-convex
otherwise,  and it is reasonable to expect that uniqueness of the solution among other
phenomena observed in \cite{RochetChone98} may be violated in that case.
In analogy with the discontinuities discovered by Loeper \cite{Loeper07p}, we
therefore conjecture that the bundling discovered by Rochet and Chon\'e
is robust with respect to perturbations of the bilinear preference function which
respect \Bzero--\Bthree, but not {generally}
with respect to perturbations violating~\Bthree.

\section{Mathematical formulation}


Any price menu $v:\cl \Y \longmapsto \R \cup\{\infty\}$ satisfies
\begin{equation}\label{vinequality}
v^b(x) + v(y) - b(x,y) \ge 0
\end{equation}
for all $(y,x) \in \cl \Y \times \cl \X$, according to definition \eqref{b-transform}.
Comparison with \eqref{indirect utility} makes it clear that
a (product, agent) pair produces equality in \eqref{vinequality} if and only if
selecting product $y$ is among the best responses of agent $x$ to this menu;
the set of such best-response pairs is denoted by $\partial^\bs v \subset \cl \Y \times \cl \X$;
see also \eqref{b-subdifferential}.
We think of this relation as giving a multivalued correspondence between
{products and agents}:
given price menu $v$ the set of agents (if any) willing to select product $y$
is denoted by $\partial^\bs v(y)$.
It turns out
$\partial^\bs v(y)$ is non-empty for all $y \in \cl\Y$ if and only if $v$ is $\bs$-convex.
Thus $b^*$-convexity of $v$ --- or of $c$ --- means precisely that each product is priced low
enough to be included among the best responses of some agent or limiting agent type $x \in \cl \X$.
As we shall see in Remark \ref{R:tie-breaker},
assuming $b^*$-convexity of $v$ costs little or no generality;
however, the $b^*$-convexity of $c$ is a real restriction ---
but plausible when the product types $\Y \subset \Rn$
represent mixtures {\dummy (weighted combinations of pure products)
which the principal could alternately choose to purchase separately
and then bundle together; this becomes natural in the context of von-Neumann and
Morgenstern preference functions \cite{vonNeumannMorgenstern44} like the one 
used by Rochet and Chon\'e \cite{RochetChone98}.}

Let $\dom Du \subset \cl\X$ denote the set where $u$ is differentiable.
If $y$ is among the best responses of agent $x \in \dom D v^b$ to price menu $v$,
the equality in \eqref{vinequality} implies
\begin{equation}\label{marginal utility}
D v^b(x) = D_x b(x,y).
\end{equation}
In other words $y=y_b(x,Dv^b(x))$, where $y_b$ is defined as follows:

\begin{definition}\label{D:b-exp}
For each $x \in \cl{\X}$ and $q \in \cl Y_{x}$, let us define $y_b(x,q)$ to be the unique solution to
\begin{equation}\label{b-exp}
D_x b(x,y_b(x,q)) = q
\end{equation}
guaranteed by \Bone. The map $y_b$ (which is defined on a subset of the cotangent
bundle $T^* \cl\X$ and takes values in $\cl Y$) has also been called the $b$-exponential map  \cite{Loeper07p},
and denoted by $y_b(x,q) = \bexp_x q$.
\end{definition}

The fact that the best response function takes the form $y=y_b(x,Dv^b(x))$,
and that $\dom D v^b$ exhausts $\cl X$ except for a countable number of Lipschitz hypersurfaces,
are among the key observations exploited in
 \cite{Gangbo95} \cite{Levin99}
following special cases worked out in
\cite{Armstrong96} \cite{Brenier91}  \cite{Caffarelli96} \cite{GangboMcCann96}
\cite{McCann95} \cite{RochetChone98}.  Indeed, $v^b$ is well-known
to be a $b$-convex function.  It is therefore Lipschitz and semiconvex, satisfying the bounds
\begin{equation}
\label{eq:lip semiconv}
|D v^b| \le \|c\|_{C^1\big(\X \times \Y\big)},\qquad D^2 v^b \ge - \|c\|_{C^2\big(\X \times \Y\big)} \qquad \text{inside }\cl X.
\end{equation}
The second equation above holds in the distributional sense, and implies the differentiability of $v^b$ outside a
countable number of Lipschitz hypersurfaces.

Assuming $\mu$ assigns zero mass to each Lipschitz hypersurface (and so also to a countable number of them),
the results just summarized
allow 
the principal's problem \eqref{primitive principal's problem} to be re-expressed
in the form
$\min\{L(u) \mid u \in \U_0\}$, where the principal's net losses are given by

\begin{equation}\label{principal's reformulation}
L(u) := \int_{\X} [u(x) + c(y_b(x,Du(x))) - b(x,y_b(x,Du(x)))] d\mu(x)
\end{equation}
as is by now well-known \cite{Carlier01}.
Here $\U_0 = \{ u \in \V^b_{\cl Y} \mid u \ge \ur\}$
denotes the set of $b$-convex functions on $\cl X$ dominating the reservation utility
$\ur(x) = b(x,y_\emptyset){-c(y_\emptyset)}$,  and 
the equality produced
in \eqref{vinequality} by the response $y_{b,v}(x) = y_b(x,D v^b(x))$
for $\mu$-a.e.\ $x$ has been exploited. Our hypothesis
on the distribution of agent types holds a fortiori whenever $\mu$ is absolutely continuous
with respect to Lebesgue measure in coordinates on $\X$.
If no such hypothesis is satisfied,
the reformulation \eqref{principal's reformulation}
of the principal's net losses
may not be well-defined, unless we extend the definition of $Du(x)$ 
to all of $\X$ by making a measurable selection from the relation
$$
\partial u(x) := \{ q \in \R^n \mid u(z) \ge u(x) + q \cdot (z-x) + o(|z-x|) \quad\forall\ z \in X\}
$$
consistent with the following tie-breaking rule, analogous to one adopted, e.g., by
Buttazzo and Carlier 
in a different but related context~\cite{ButtazzoCarlier09p}:

\begin{remark}\label{R:tie-breaker}[Tie-breaking rules for singular measures]
When an agent $x$ remains indifferent between two or more products,
it is convenient to reduce the ambiguity in the definition of his
best response by insisting that $y_{b,v}(x)$ 
be chosen to maximize the principal's profit $v(y)-c(y)$,
among those products $y$ which maximize \eqref{indirect utility}.
We retain the result $y_{b,v}(x) = y_b(x,Dv^b(x))$ by a corresponding selection
$Dv^b(x) \in \partial v^b(x)$.
This convention costs no generality when the distribution $\mu$ of agent types
vanishes on Lipschitz hypersurfaces in $\X$, since $u=v^b$
is then differentiable $\mu$-a.e.; in the remaining cases it may be
justified by assuming the principal has sufficient powers of persuasion to sway an
agent's choice to her own advantage
whenever some indifference would otherwise persist
between his preferred products \cite{Mirrlees71}.
After adopting this convention,  it costs the
principal none of her profits to restrict her choice of strategies to $\bs$-convex
price menus $v=(v^b)^\bs$, a second convention we also choose to adopt whenever
$\mu$ fails to vanish on each Lipschitz hypersurface.
\end{remark}

The relevance of Theorem \ref{T:convex set U} to the principal-agent problem
should now be clear:  it guarantees convexity of the feasible set $\U_0$ in
\eqref{principal's reformulation}.  Our next proposition addresses the convexity
properties of the principal's objective functional.  Should convexity of this objective be strict,
then the 
best response $y_{b,v}(x)$ 
selected by the tie-breaking rule above becomes unique
--- which it need not be otherwise.

\begin{proposition}[Convexity of the principal's objective]
\label{P:convexity}
If $b\in C^4\big(\cl\X \times \cl\Y\big)$ satisfies \Bzero--\Bthree\ and
$c:\cl \Y \longmapsto \R$ is $\bs$-convex,  then
for each $x \in \cl X$, definition \eqref{b-exp} makes
$a(q):= c(y_b(x,q)) - b(x,y_b(x,q))$
a convex function of $q$
on the convex set $\cl\Y_x := D_x b(x,\cl\Y) \subset \R^n$.
The convexity of $a(q)$ is strict if either $c$ is strictly $\bs$-convex ---
meaning the {\em efficient} allocation $y_{b,c}:\cl \X \longmapsto \cl \Y$ is continuous ---
or alternately if
\begin{equation}\label{BthreeS}
q \in \cl\Y_x \longmapsto b(x_0,y_b(x,q)) - b(x,y_b(x,q))
\end{equation}
is a strictly convex function of $q$ for each $x,x_0 \in \cl\X$.
If the preference function
$b$ is positively cross-curved 
on $\cl \X \times \cl \Y$,  then convexity of $a(q)$
is {\em strong} (meaning $a(q) - \epsilon|q|^2/2$ remains convex on $\cl\Y_x$ for
some $\epsilon>0$).
\end{proposition}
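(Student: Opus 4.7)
The plan is to recast $a$ as a pointwise supremum of convex slices by exploiting the $b^*$-convexity of $c$, then verify convexity of each slice via hypothesis \Bthree. Since $c$ is $b^*$-convex, $c(y) = \sup_{x' \in \cl\X}\bigl[b(x',y) - c^b(x')\bigr]$; substituting $y = y_b(x,q)$ and subtracting $b(x, y_b(x,q))$ yields
\[
a(q) \,=\, \sup_{x' \in \cl\X} \Phi_{x'}(q), \qquad \Phi_{x'}(q) \,:=\, b(x', y_b(x,q)) - b(x, y_b(x,q)) - c^b(x'),
\]
so convexity of $a$ reduces to convexity of each slice $\Phi_{x'}$ on $\cl\Y_x$.

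To prove convexity of a single slice, fix $x' \in \cl\X$ and an affine segment $q(t) = (1-t)q_0 + tq_1$ in $\cl\Y_x$; set $y(t) := y_b(x, q(t))$, so $D_x b(x, y(t)) = q(t)$ is affine in $t$. One must show $t \mapsto b(x', y(t)) - b(x, y(t))$ is convex. This is precisely the $b$-exponential reformulation of \Bthree\ furnished by Lemma \ref{L:t-convex DASM}. Concretely, \Btwo\ supplies a curve $s \mapsto x(s)$ with $x(0) = x$, $x(1) = x'$, and $s \mapsto D_y b(x(s), y(0))$ affine; \Bthree\ then gives $\partial_s^2 \partial_t^2 b(x(s), y(t))|_{(0,0)} \geq 0$, and the covariance and tensorization of \Bthree\ established in \cite{KimMcCann07p, KimMcCann08p} propagate this positivity throughout the rectangle $[0,1]^2$. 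Integrating twice in $s$ then yields $\partial_t^2\bigl[b(x(1), y(t)) - b(x(0), y(t))\bigr] \geq 0$, as required.

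Being a pointwise supremum of convex functions, $a$ is convex on $\cl\Y_x$. For \emph{strong} convexity under positive cross-curvature \BthreeU, compactness of $\cl\X \times \cl\Y$ and continuity of the fourth derivatives of $b$ produce a uniform $\epsilon > 0$ bounding $\partial_s^2 \partial_t^2 b$ from below, so each $\Phi_{x'}$ is uniformly strongly convex with modulus $\epsilon$ and the supremum inherits strong convexity. For \emph{strict} convexity under \eqref{BthreeS}, each $\Phi_{x'}$ is strictly convex; choosing an $x'_{1/2}$ attaining the supremum at the midpoint $q_{1/2}$ (which exists since $\cl\X$ is compact and $x' \mapsto \Phi_{x'}(q_{1/2})$ is continuous, using $b \in C^4$ and Lipschitz continuity of $c^b$), the chain $a(q_{1/2}) = \Phi_{x'_{1/2}}(q_{1/2}) < \tfrac{1}{2}\bigl[\Phi_{x'_{1/2}}(q_0) + \Phi_{x'_{1/2}}(q_1)\bigr] \leq \tfrac{1}{2}\bigl[a(q_0) + a(q_1)\bigr]$ delivers strict convexity of $a$.

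Strict convexity when $c$ is strictly $b^*$-convex is the most delicate case and proceeds by contradiction. If $a$ were affine on some segment $[q_0, q_1]$ with $q_0 \neq q_1$, then for any $\lambda \in (0,1)$ and any $x'_\lambda$ realizing $\Phi_{x'_\lambda}(q_\lambda) = a(q_\lambda)$, the inequalities $a(q_\lambda) = \Phi_{x'_\lambda}(q_\lambda) \leq (1-\lambda)\Phi_{x'_\lambda}(q_0) + \lambda\Phi_{x'_\lambda}(q_1) \leq (1-\lambda)a(q_0) + \lambda a(q_1) = a(q_\lambda)$ collapse to equalities, forcing $\Phi_{x'_\lambda}(q_i) = a(q_i)$ for $i = 0, 1$. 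Thus $x'_\lambda$ is simultaneously a maximizer in the $c$-representation at both $y_0 := y_b(x, q_0)$ and $y_1 := y_b(x, q_1)$. Strict $b^*$-convexity of $c$ is equivalent to $c^b$ being continuously differentiable on $\cl\X$ (since $y_{b,c}(x) = y_b(x, Dc^b(x))$ must then be single-valued and continuous), and the first-order condition at $x'_\lambda$ then gives $Dc^b(x'_\lambda) = D_x b(x'_\lambda, y_0) = D_x b(x'_\lambda, y_1)$. The twist \Bone\ forces $y_0 = y_1$, contradicting $q_0 \neq q_1$ and the fact that $y_b(x, \cdot)$ is a diffeomorphism. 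The principal obstacle I anticipate is step two --- making rigorous the propagation of \Bthree's positivity across the rectangle $[0,1]^2$ via covariance and tensorization, or equivalently invoking Lemma \ref{L:t-convex DASM} as a black box.
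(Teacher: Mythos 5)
Your proposal is correct, and the skeleton is the paper's: write $a(q)=\sup_{x'\in\cl\X}\Phi_{x'}(q)$ by $b^*$-convexity of $c$, verify each slice $\Phi_{x'}$ is convex via Lemma~\ref{L:t-convex DASM}, and inherit convexity (resp.\ strict/strong convexity under \BthreeS/\BthreeU) from the supremum. Your ``attain the sup at the midpoint and collapse all inequalities'' chain is the right way to pass strictness through a supremum, and the paper leaves it implicit where you spell it out. (Your sketch of a direct proof of Lemma~\ref{L:t-convex DASM} by propagating the sign of $\partial_s^2\partial_t^2 b$ over the rectangle is hand-wavy, but you correctly flag that invoking the lemma as a black box is the intended and safe route.)

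Where you genuinely diverge from the paper is the last case, strict convexity of $a$ from strict $b^*$-convexity of $c$ --- and your argument is considerably simpler. The paper changes variables to $\tb,\tc$, picks an extreme point of the (constant) subdifferential $\partial\tc(q_t)$ along the offending segment, approximates it by gradients $D\tc(q_{t,k})$, passes to a limit in $\partial^{\tb^*}\tc$, invokes the implicit function theorem to get a curve $x_t$, and shows $\dot q_t$ is a null eigenvector of $D^2_{qq}\tb$, forcing $x_t$ constant while $q_t=y_{\tb,\tc}(x_t)$ is not. Your argument cuts all that: if $a$ is affine on $[q_0,q_1]$ then a maximizer $x'_\lambda$ at the midpoint satisfies $\Phi_{x'_\lambda}(q_i)=a(q_i)$ for $i=0,1$, which unwinds to $c(y_i)+c^b(x'_\lambda)=b(x'_\lambda,y_i)$, i.e.\ both $y_0$ and $y_1$ lie in the best-response set of $x'_\lambda$ to price menu $c$; strict $b^*$-convexity makes $y_{b,c}(x'_\lambda)$ a singleton, so $y_0=y_1$, contradicting injectivity of $y_b(x,\cdot)$ from \Bone. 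This is the cleaner proof. One small simplification available to you: the detour through differentiability of $c^b$ and the first-order condition $Dc^b(x'_\lambda)=D_xb(x'_\lambda,y_i)$ is unnecessary --- single-valuedness of the best-response correspondence (which is what continuity of $y_{b,c}:\cl\X\to\cl\Y$ presupposes) already delivers $y_0=y_1$ directly, without appealing to the envelope theorem or to smoothness of $c^b$. That also sidesteps any worry about $x'_\lambda$ landing on $\partial\X$ where a first-order condition would need care.
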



Strict convexity of \eqref{BthreeS} may subsequently be denoted by \BthreeS.
As an immediate corollary to Theorem \ref{T:convex set U} and Proposition \ref{P:convexity},
we have convexity of the principal's optimization problem.

\begin{corollary}[Convexity of the principal's minimization]
\label{C:convex program}
Let the distribution of agent types be given by a Borel probability measure $\mu$
on $X \subset \Rn$.  Unless $\mu$ vanishes on all Lipschitz hypersurfaces, adopt
the tie-breaking conventions 
of Remark~\ref{R:tie-breaker}.
If the preference $b(x,y)$ of agent $x \in \cl\X$ for product
$y\in \cl\Y$ satisfies \Bzero--\Bthree\ and the principal's
manufacturing cost $c:\cl \Y \longmapsto \R$ is $\bs$-convex,
then the principal's problem \eqref{principal's reformulation}
becomes a convex minimization over the convex set $\U_0$.
\end{corollary}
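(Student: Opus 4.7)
The plan is to assemble the corollary directly from Theorem~\ref{T:convex set U} and Proposition~\ref{P:convexity}, the former providing convexity of the feasible set and the latter convexity of the integrand. First, by Theorem~\ref{T:convex set U}, hypotheses \Bzero--\Bthree\ make $\V^b_{\cl\Y}$ convex. Since the reservation-utility constraint $u \ge \ur$ is a family of linear (hence convex) constraints, the intersection $\U_0 = \V^b_{\cl\Y} \cap \{u \ge \ur\}$ is convex. Note also that $\U_0$ is nonempty: the constant utility $u \equiv \ur$ is not itself $b$-convex in general, but one always has $\ur \in \V^b_{\cl\Y}$ (it is the $b$-transform of the function on $\cl\Y$ equal to $c(y_\emptyset)$ at $y_\emptyset$ and $+\infty$ elsewhere), so $\ur \in \U_0$.

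Next I would rewrite the objective \eqref{principal's reformulation} as
\begin{equation*}
L(u) = \int_{\X} \bigl[\, u(x) + a_x(Du(x)) \,\bigr]\, d\mu(x),
\qquad a_x(q) := c(y_b(x,q)) - b(x,y_b(x,q)),
\end{equation*}
and invoke Proposition~\ref{P:convexity}, which gives convexity of $q \mapsto a_x(q)$ on $\cl\Y_x$ for each fixed $x$ (this uses the $\bs$-convexity of $c$). Fix $u_0, u_1 \in \U_0$ and $t \in [0,1]$, and set $u_t := (1-t)u_0 + tu_1 \in \U_0$. At every $x$ where $u_0$ and $u_1$ are both differentiable, so is $u_t$, with $Du_t(x) = (1-t)Du_0(x) + tDu_1(x)$; combining the linear identity $u_t(x) = (1-t)u_0(x) + tu_1(x)$ with convexity of $a_x$ gives the pointwise bound
\begin{equation*}
u_t(x) + a_x(Du_t(x)) \le (1-t)\bigl[u_0(x) + a_x(Du_0(x))\bigr] + t\bigl[u_1(x) + a_x(Du_1(x))\bigr].
\end{equation*}
When $\mu$ vanishes on every Lipschitz hypersurface, the semiconvexity estimate \eqref{eq:lip semiconv} makes $u_0, u_1$ differentiable $\mu$-a.e., and integrating the displayed inequality against $\mu$ yields $L(u_t) \le (1-t)L(u_0) + tL(u_1)$.

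The main obstacle is the singular case, where $\mu$ charges a Lipschitz hypersurface and the tie-breaking convention of Remark~\ref{R:tie-breaker} is in force; then $Du_t(x)$ is no longer a linear function of $u_t$, so the pointwise argument above is not immediate. My approach is the following: the tie-breaking rule selects $Du_t(x) \in \partial u_t(x)$ so as to minimize the principal's net loss $a_x(q) + u_t(x)$ over $q \in \partial u_t(x)$, hence $a_x(Du_t(x)) \le a_x(q)$ for any admissible $q$. One checks that if $q_i \in \partial u_i(x)$ for $i=0,1$, then the convex combination $q_t := (1-t)q_0 + tq_1$ lies in $\partial u_t(x)$ (from the definition of the subdifferential and linearity of convex combinations of the supporting-hyperplane inequalities). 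Taking $q_i = Du_i(x)$ (the tie-breaking selections) gives a valid $q_t \in \partial u_t(x)$, and convexity of $a_x$ from Proposition~\ref{P:convexity} then yields
\begin{equation*}
a_x(Du_t(x)) \le a_x(q_t) \le (1-t)\, a_x(Du_0(x)) + t\, a_x(Du_1(x)),
\end{equation*}
restoring the pointwise inequality. Integration against $\mu$ then completes the proof, and the corollary follows.
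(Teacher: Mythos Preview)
Your argument is essentially the same as the paper's: convexity of $\U_0$ from Theorem~\ref{T:convex set U}, pointwise convexity of $a_x$ from Proposition~\ref{P:convexity}, and in the singular case the observation that $(1-t)Du_0(x)+tDu_1(x)\in\partial u_t(x)$ together with the tie-breaking rule.

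There is one small but genuine gap in your treatment of the singular case. You assert that ``the tie-breaking rule selects $Du_t(x)\in\partial u_t(x)$ so as to minimize $a_x(q)$ over $q\in\partial u_t(x)$.'' But Remark~\ref{R:tie-breaker} says only that one selects among \emph{best responses} $y\in\partial^b u_t(x)$ to maximize the principal's profit, and then picks a corresponding $q\in\partial u_t(x)$ with $y_b(x,q)=y$. For your inequality $a_x(Du_t(x))\le a_x(q_t)$ to follow, you need to know that $q_t:=(1-t)Du_0(x)+tDu_1(x)$ is itself eligible, i.e.\ that $y_b(x,q_t)\in\partial^b u_t(x)$. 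A priori, an arbitrary subgradient $q\in\partial u_t(x)$ need not yield a best response when $u_t$ fails to be differentiable at $x$. The paper closes this gap by invoking Theorem~3.1 of Loeper~\cite{Loeper07p} (or equivalently \cite{KimMcCann07p}), which asserts under \Bzero--\Bthree\ that $q\in\partial u_t(x)$ implies $y_b(x,q)\in\partial^b u_t(x)$; with this in hand the tie-breaking minimization indeed ranges over all of $\partial u_t(x)$, and your argument goes through verbatim.
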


As a consequence, we obtain criteria guaranteeing uniqueness of the principal's
best strategy.

\begin{theorem}[Criteria for uniqueness of optimal strategies]
\label{T:uniqueness}
Assume the notation and hypotheses of Corollary \ref{C:convex program}.
Suppose, in addition, that the manufacturing cost $c$ is strictly $\bs$-convex, or that
the preference function $b$ is positively cross-curved \BthreeU, or that $b$
satisfies \BthreeS, as in \eqref{BthreeS}.  Then the equilibrium response of $\mu$-almost every agent
is uniquely determined,  as is the optimal measure $\nu$ from
\eqref{push-forward};
(always assuming the tie-breaking conventions of Remark \ref{R:tie-breaker} to be in effect
 if $\mu$ does not vanish on each Lipschitz hypersurface).
Moreover, the principal has two optimal strategies $u_\pm \in \U_0$
which coincide at least $\mu$-almost everywhere,  and sandwich
all other optimal strategies $u \in \U_0$ between them:
$u_- \le u \le u_+$ on $\cl \X$.  Finally,
a lower semicontinuous $v:\cl\Y \longmapsto \R \cup \{+\infty\}$ is an
optimal price menu if and only if $v \ge u_+^\bs$ throughout $\cl \Y$, with
equality holding $\nu$-almost everywhere.
\end{theorem}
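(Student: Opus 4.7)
The plan is to exploit the convex-programming structure provided by Corollary~\ref{C:convex program}, together with the strict convexity in $q$ of the principal's integrand $a_x(q) := c(y_b(x,q)) - b(x,y_b(x,q))$, which Proposition~\ref{P:convexity} guarantees under each of the three added hypotheses. The argument divides naturally into three steps: uniqueness of the $\mu$-almost-everywhere agent response and production measure, construction of the envelopes $u_\pm$, and the price-menu characterization.

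For the uniqueness step, suppose $u_0, u_1 \in \U_0$ both minimize $L$. Since $\V^b_{\cl\Y}$ is convex by Theorem~\ref{T:convex set U} and $\{u \ge \ur\}$ is trivially convex, $\U_0$ is convex, so $\bar u := (u_0+u_1)/2 \in \U_0$ and by convexity of $L$ it too is optimal. Writing $L(u) = \int u\,d\mu + \int a_x(Du)\,d\mu$, the linearity of the first term combined with $L(\bar u) = \tfrac{1}{2}(L(u_0)+L(u_1))$ forces equality in the convex inequality $\int a_x(D\bar u)\,d\mu \le \tfrac{1}{2}\int[a_x(Du_0) + a_x(Du_1)]\,d\mu$. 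Strict convexity of $a_x$ then yields $Du_0(x) = Du_1(x)$ for $\mu$-a.e.\ $x$, so the best response $y_b(x,Du(x))$ and hence $\nu = (y_b(\cdot,Du))_\#\mu$ are uniquely determined. The $\mu$-a.e.\ differentiability used here follows from the Lipschitz-semiconvexity of $b$-convex functions combined with our hypothesis on $\mu$, or else from the tie-breaking convention of Remark~\ref{R:tie-breaker}.

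For the envelopes, let $\mathcal{O} \subset \U_0$ denote the nonempty convex set of optima and define $u_+(x) := \sup_{u \in \mathcal{O}} u(x)$. Since each $b$-convex function can be written as $u = \sup_y [b(\cdot,y) - u^{\bs}(y)]$, the pointwise supremum $u_+$ is $b$-convex and dominates $\ur$, so $u_+ \in \U_0$. The key subclaim is that $Du_+(x) = Du_0(x)$ $\mu$-a.e.\ for any fixed $u_0 \in \mathcal{O}$: the uniform semiconvexity bound $u(z) \ge u(x) + Du_0(x)\cdot(z-x) - C|z-x|^2$ (valid for every $u \in \mathcal{O}$ with $C = \|b\|_{C^2}$ on a common full-$\mu$-measure set) survives the pointwise supremum, yielding $u_+(z) \ge u_+(x) + Du_0(x)\cdot(z-x) - C|z-x|^2$, which pins $Du_+(x) = Du_0(x)$ wherever $u_+$ is differentiable. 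Combined with $u_+ \ge u_0$ pointwise, this gives $L(u_+) - L(u_0) = \int(u_+ - u_0)\,d\mu \ge 0$. To close with the reverse inequality, I approximate $u_+$ by $v_n := \sup\{u_1,\ldots,u_n\}$ along a countable subfamily dense in the uniform topology (available by separability of $C(\cl\X)$ and the uniform Lipschitz bound on $\mathcal{O}$): each $v_n$ is $b$-convex, its gradient satisfies $Dv_n = Du_0$ $\mu$-a.e., and monotone convergence $v_n \nearrow u_+$ together with the variational equality characterizing $\mathcal{O}$ forces $L(u_+) \le \min L$. Hence $u_+ \in \mathcal{O}$ and $u_+ = u_0$ $\mu$-a.e. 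The lower envelope $u_-$ is constructed symmetrically as the $b$-biconjugate $((\inf_{u \in \mathcal{O}} u)^{\bs})^b$ of the infimum.

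For the price-menu characterization, any lower semicontinuous $v$ satisfies $v \ge v^{b\bs}$ in general, with equality iff $v$ is $\bs$-convex. If $v$ is optimal then $u := v^b \in \mathcal{O}$ satisfies $u \le u_+$, so the order-reversing $\bs$-transform gives $u^{\bs} \ge u_+^{\bs}$, whence $v \ge v^{b\bs} = u^{\bs} \ge u_+^{\bs}$ throughout $\cl\Y$; conversely $v = u_+^{\bs}$ is itself optimal since $(u_+^{\bs})^b = u_+$. Equality $\nu$-a.e.\ comes from the $b$-contact identity: for $\mu$-a.e.\ $x$ the response $y := y_b(x,Du_+(x))$ satisfies $u_+(x) + u_+^{\bs}(y) = b(x,y)$, while $v(y) = b(x,y) - u(x) = b(x,y) - u_+(x)$ by virtue of $u = u_+$ $\mu$-a.e.; pushing forward by $y_b(\cdot,Du_+)$ then gives $v = u_+^{\bs}$ $\nu$-a.e. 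The main obstacle I foresee is the envelope construction, specifically establishing that the supremum of $\mathcal{O}$ actually attains the optimum despite $\mathcal{O}$ being a priori uncountable: the argument requires a delicate interplay of semiconvexity, separability of $C(\cl\X)$, and the variational equality $L(u) = \min L$ for $u \in \mathcal{O}$.
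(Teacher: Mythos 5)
Your uniqueness step (strict convexity of $q \mapsto a_x(q)$ forcing $Du_0 = Du_1$ $\mu$-a.e.\ for any two minimizers) is correct and matches the paper's opening move.  The real difficulty is the envelope construction, and there your argument has a genuine gap.  You correctly observe that $u_+:=\sup_{\mathcal O} u$ is $b$-convex and that $Du_+=Du_0$ where defined (setting aside a measurability quibble about the uncountable family), which gives
$L(u_+)-L(u_0)=\int(u_+-u_0)\,d\mu\ge 0$.  But the reverse inequality is exactly what is at stake, and the countable-approximation step does not deliver it: for $v_n:=\sup\{u_1,\dots,u_n\}$ you again have $Dv_n=Du_0$ $\mu$-a.e.\ and $v_n\ge u_0$, so $L(v_n)-L(u_0)=\int(v_n-u_0)\,d\mu\ge 0$, and monotone convergence pushes this limit up, not down.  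To conclude $\int u_+\,d\mu = \int u_0\,d\mu$ you would already need to know that all optimal strategies agree \emph{in value} (not merely in gradient) $\mu$-a.e.\ --- but that is precisely the statement being proved, so the argument is circular.  Gradient equality alone cannot control the constants: two $b$-convex optima could in principle agree in gradient on $\spt\mu$ while differing by a constant on a connected component of $\spt\mu$, or differ substantially when $\mu$ is discrete.

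The paper's mechanism for closing this gap is what your proposal is missing.  One first shows that the participation constraint must bind at some $x_0\in\spt\mu$ for \emph{every} optimal strategy (otherwise subtracting a small constant would reduce losses), which pins down a common value $u(x_0)=u_\emptyset(x_0)$.  One then runs the Rochet--Rockafellar construction through the $b$-cyclically monotone set $S=\{(x_0,y_\emptyset)\}\cup\spt\gamma$ (where $\gamma:=(id\times y_{b,v})_\#\mu$ is independent of which optimizer was used, by your gradient-uniqueness step) to obtain a \emph{minimal} $b$-convex $u_-$ vanishing at $x_0$ in the appropriate normalization, and the argument $L(u_i)-L(u_-)\ge\int(u_i-u_-)\,d\mu\ge 0$ forces $u_i=u_-$ on $\spt\mu$.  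Only after that equality of values on $\spt\mu$ is in hand can one verify $u_+=u_-$ on $\spt\mu$, $\spt\gamma\subset\partial^b u_+$, and hence $L(u_+)\le L(u)$.  Your $u_-$ sketch compounds the problem: the $\bs b$-biconjugate of $\inf_{\mathcal O}u$ is indeed $b$-convex and $\le$ every optimal $u$, but you neither verify $u_-\ge u_\emptyset$ (so $u_-\in\U_0$) nor that $u_-$ is itself optimal, and these do not follow by any symmetry with the $\sup$ case (the infimum of $b$-convex functions is not $b$-convex, unlike the supremum).  Your price-menu paragraph is also short of the stated claim: it addresses $v=u_+^\bs$ but not a general $v\ge u_+^\bs$ with equality $\nu$-a.e., for which the paper must separately argue (invoking the tie-breaking conventions when $\mu$ is singular) that $v^b$ is optimal.
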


This theorem gives hypothesis which guarantee --- even for discrete measures $\mu$ corresponding
to finitely many agent types --- that the solution to the principal's problem is unique in the
sense that optimality determines how many of each type of product the principal should manufacture,
what price she should charge for each of them, and which product will be selected
by almost every agent.  A lower bound is specified on the price of each product which
she does not wish to produce, to ensure that it does not tempt any agent.
When $\mu$ vanishes on Lipschitz hypersurfaces,  this solution represents
the only Stackelberg equilibrium balancing the interests of the principal with those of the agents;
for more singular $\mu$,  it is possible that other Stackelberg equilibria exist, but if so they
violate the restrictions imposed on the behaviour of the principal and the agents in
Remark~\ref{R:tie-breaker}.

{The uniqueness theorem has as its corollary the following stability result
concerning optimal strategies.
Recall that a sequence $\{\mu_i\}_{i=1}^\infty$ of Borel probability measures on a compact set $\cl \X \subset \Rn$
is said to converge {\em weakly-$*$} to $\mu_\infty$ if
\begin{equation}\label{weak convergence}
\int_{\cl \X} g(x) d\mu_\infty(x) = \lim_{i \to \infty} \int_{\cl \X} g(x) d\mu_i(x)
\end{equation}
for each continuous test function $g:\cl \X \longmapsto \R$.
This notion of convergence makes the Borel probability measures $\Prob\big(\cl X\big)$ on
$\cl\X$ into a compact set, as a consequence of the Riesz-Markov and Banach-Alaoglu theorems.

\begin{corollary}[Stability of optimal strategies]\label{C:stability}
For each $i \in \N \cup \{ \infty\}$, let the triple $( b_i, c_i,  \mu_i)$
consist of a preference function $b_i :  \cl \X \times \cl \Y \longmapsto \R$,
manufacturing cost $c_i:\cl \Y \longmapsto \R$,
and a distribution of agent types $\mu_i $ on $\X$ satisfying the hypotheses of
Theorem~\ref{T:uniqueness}.  Let $u_i:\cl X \longmapsto \R$ denote a $b_i$-convex utility function
minimizing the losses of a principal faced with data $( b_i, c_i,  \mu_i)$.
Suppose that $b_i \to b_\infty$ in $C^2\big(\cl \X \times \cl \Y\big)$, $c_i \to c_\infty$ uniformly on
$\cl \Y$, and $\mu_i \rightharpoonup \mu_\infty$ weakly-$*$ as $i\to \infty$.
Assume finally that
$\mu_\infty$ vanishes on all Lipschitz hypersurfaces.  For $\mu_\infty$-a.e.\ agent $x \in \X$, the
product
$G_i(x) := y_{b_i}(x,D u_i(x))$ selected then converges to $G_\infty(x)$.
The 
optimal measures $\nu_i := (G_i)_\# \mu_i$ converge weakly-$*$
to $\nu_\infty$ as $i \to \infty$.  And the principal's
strategies converge uniformly in the sense that
$\lim_{i\to \infty } \|u_i - u_\i\|_{L^\infty(X,d\mu_\infty)}=0$.

\end{corollary}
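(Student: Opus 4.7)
My plan is to combine an Arzelà--Ascoli compactness argument with the uniqueness assertion of Theorem~\ref{T:uniqueness}. The a priori bounds \eqref{eq:lip semiconv}, together with uniform control on $\|b_i\|_{C^2}$ coming from $b_i \to b_\infty$ in $C^2$, make $\{u_i\}$ equi-Lipschitz and equi-semiconvex on $\cl \X$. The constraint $u_i(x) \ge b_i(x,y_\emptyset) - c_i(y_\emptyset)$ supplies a uniform lower bound, and the inequality $L_i(u_i) \le 0$ (since the principal can always enforce zero loss by setting $u \equiv \ur^i$) combined with the Lipschitz bound yields a uniform upper bound via $\int u_i\, d\mu_i \le \|b_i\|_\infty + \|c_i\|_\infty$. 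Passing to subsequences (not relabelled) I obtain $u_i \to u_*$ uniformly on $\cl \X$ and $u_i^{b_i^*} \to w_*$ uniformly on $\cl \Y$; since $b_i \to b_\infty$ uniformly and the sup in $u_i(x) = \sup_y [b_i(x,y) - u_i^{b_i^*}(y)]$ passes to the limit, $u_* = w_*^{b_\infty}$ is $b_\infty$-convex. The limit of the reservation inequality then places $u_*$ in $\U_0$ for the data $(b_\infty, c_\infty)$.

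Next I would verify that $u_*$ is optimal via a recovery-sequence argument. Given any competitor $w \in \U_0$ in the limit problem, set $v := w^{b_\infty^*}$ and define $v_i$ to equal $v$ off $y_\emptyset$ and $\min(v(y_\emptyset), c_i(y_\emptyset))$ at $y_\emptyset$; then $w_i := v_i^{b_i}$ meets the $i$-th reservation constraint and $w_i \to w$ uniformly. A standard fact about equi-semiconvex functions converging uniformly is that $D w_i(x) \to Dw(x)$ and $D u_i(x) \to D u_*(x)$ at each differentiability point of the respective limit; since $\mu_\infty$ vanishes on Lipschitz hypersurfaces, these sets have full $\mu_\infty$-measure. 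Implicit differentiation of \eqref{b-exp} supplies joint continuity of $(b, q, x) \mapsto y_b(x,q)$ under $C^2$-convergence of $b$, so the integrands defining $L_i(u_i)$ and $L_i(w_i)$ converge $\mu_\infty$-a.e. to those of $L_\infty(u_*)$ and $L_\infty(w)$. Combining this with weak-$*$ convergence $\mu_i \rightharpoonup \mu_\infty$ through an Egorov--Lusin--Tietze argument (detailed below) yields $L_i(u_i) \to L_\infty(u_*)$ and $L_i(w_i) \to L_\infty(w)$; the inequality $L_i(u_i) \le L_i(w_i)$ passes to the limit, so $u_*$ minimizes $L_\infty$ on $\U_0$. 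Theorem~\ref{T:uniqueness} now identifies $u_*$ with $u_\infty$ $\mu_\infty$-a.e., and a subsequence-of-subsequence argument promotes the convergence to the full sequence: $\|u_i - u_\infty\|_{L^\infty(\X, d\mu_\infty)} \to 0$.

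Pointwise a.e.\ convergence $G_i(x) \to G_\infty(x)$ is precisely the gradient-convergence statement above applied to the optimal utilities. For weak-$*$ convergence of $\nu_i = (G_i)_\# \mu_i$, fix $g \in C(\cl \Y)$; given $\epsilon > 0$, Egorov produces a compact $K_\epsilon \subset \X$ with $\mu_\infty(\X \setminus K_\epsilon) < \epsilon$ on which $g \circ G_i \to g \circ G_\infty$ uniformly, and a Tietze extension $\tilde g \in C(\cl \X)$ of $g \circ G_\infty|_{K_\epsilon}$ yields the decomposition
\[
\int g\, d\nu_i - \int g\, d\nu_\infty = \int (g\circ G_i - \tilde g)\, d\mu_i + \left(\int \tilde g\, d\mu_i - \int \tilde g\, d\mu_\infty\right) + \int (\tilde g - g \circ G_\infty)\, d\mu_\infty,
\]
whose first term is controlled by uniform convergence on $K_\epsilon$ plus Portmanteau-type control of $\mu_i(\X \setminus K_\epsilon)$ for large $i$, whose second term tends to zero by weak-$*$ convergence, and whose third term is bounded by $2\|g\|_\infty \epsilon$. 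The main obstacle throughout is precisely this coupling: semiconvexity naturally delivers pointwise convergence $\mu_\infty$-a.e., yet we must integrate against the varying measures $\mu_i$; the hypothesis that $\mu_\infty$ vanish on Lipschitz hypersurfaces enters essentially to guarantee $\mu_\infty$-a.e.\ differentiability of the limit utilities, which in turn is what drives pointwise convergence of the optimal allocation maps and the full Egorov step.
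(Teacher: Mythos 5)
Your outline is the same as the paper's: Ascoli--Arzel\`a extraction of a uniform subsequential limit $u_*$, verification that $u_*$ is $b_\infty$-convex, optimality of $u_*$ by comparison with recovery competitors, identification with $u_\infty$ via Theorem~\ref{T:uniqueness}, and a subsequence-of-subsequences upgrade. (Your recovery sequence $w_i := v_i^{b_i}$, with $v_i$ truncated at $y_\emptyset$, is a fine variant of the paper's $w^{b_\infty^*b_i}$.) What the paper does differently is to isolate the delicate limit-passing step as Proposition~\ref{P:convergence}, which lifts everything to joint measures $\gamma_i = (id\times G_i)_\#\mu_i \in \Prob\big(\cl\X\times\cl\Y\big)$, and this is exactly where your Egorov--Lusin--Tietze substitute has a genuine hole.

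Concretely: in your decomposition you need $\mu_i(\cl\X\setminus K_\epsilon)$ to be small for large $i$, where $K_\epsilon$ is the Egorov compact. But $\cl\X\setminus K_\epsilon$ is (relatively) open, and Portmanteau only gives $\liminf_i \mu_i(\cl\X\setminus K_\epsilon) \ge \mu_\infty(\cl\X\setminus K_\epsilon)$ --- a lower bound, the wrong direction. The two-sided Portmanteau criterion requires $\mu_\infty(\partial K_\epsilon)=0$, which you cannot arrange for an arbitrary Egorov set, even when $\mu_\infty$ kills Lipschitz hypersurfaces. And the obstruction is real, not cosmetic: $\mu_\infty$-a.e.\ pointwise convergence of uniformly bounded integrands together with weak-$*$ convergence $\mu_i\rightharpoonup\mu_\infty$ does \emph{not} imply convergence of $\int h_i\,d\mu_i$ (take $h_i\equiv \mathbf{1}_{\mathbb{Q}}$ on $[0,1]$ and $\mu_i$ the uniform measure on $\{j/i\}_{j=1}^i$, which converges weakly-$*$ to Lebesgue). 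What rescues the argument in this setting is not mere a.e.\ convergence but \emph{continuous convergence} of the equi-semiconvex gradients: $x_i\to x\in \dom Du_\infty$ and $q_i\in\partial u_i(x_i)$ force $q_i\to Du_\infty(x)$. The cleanest way to exploit this --- and the paper's way --- is to pass to the joint measures $\gamma_i$, extract a weak-$*$ limit $\bar\gamma$, observe $\spt\bar\gamma\subset\partial^{b_\infty}u_\infty$ from the uniformly convergent inequality $u_i(x)+u_i^{b_i^*}(y)\ge b_i(x,y)$, and then use uniqueness of a measure supported in $\partial^{b_\infty}u_\infty$ with left marginal $\mu_\infty$ (here is precisely where the hypothesis that $\mu_\infty$ vanish on Lipschitz hypersurfaces bites) to pin down $\bar\gamma=\gamma_\infty$. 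The loss functional is then re-expressed as in \eqref{tlosses} as an integral of the \emph{continuous}, uniformly convergent functions $c_i(y)-u_i^{b_i^*}(y)$ against $\gamma_i$, and weak-$*$ convergence closes the argument. Without this lifting (or an explicit continuous-convergence lemma in its place), neither your loss convergence $L_i(u_i)\to L_\infty(u_*)$ nor the weak-$*$ convergence $\nu_i\rightharpoonup\nu_\infty$ follows from the steps you have written.
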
 }


Finally as evidence for the robustness of bunching phenomena displayed by our models,
we show the desirability of exclusion phenomenon found by Armstrong
for preference functions $b(x,y) = \sum_{i=1}^n x_i b_i(y)$ which are linear in $x$
--- or more generally homogeneous \cite{Armstrong96} ---
extends to the full range of non-negatively cross-curved models.
We assume strict convexity on the domain $X_{y_\emptyset} := D_y b(X,y_\emptyset)$
(see Remark \ref{R:facets}), and
that the distribution of agent types $d\mu(x)=f(x)dx$
has a {\dummy Sobolev density --- denoted $f \in W^{1,1}\big(\cl X\big)$ and meaning both
the function and its distributional derivative $Df$ are given by Lebesgue integrable
densities.}  This is satisfied a fortiori if $f$ is Lipschitz or continuously
differentiable (as Armstrong assumed). The exclusion phenomenon is of interest,
since it confirms that a positive fraction of customers
must be excluded from participation at equilibrium, thus ensuring elasticity of demand.

\begin{theorem}[The desirability of exclusion]
\label{T:Armstrong} Let the distribution  $d\mu(x)=f(x)dx$ of
agent types be given by a density $f \in W^{1,1}$
on $\cl \X \subset \Rn$. Assume that the preference $b(x,y)$ of agent
$x \in \cl\X$ for product $y\in \cl\Y$ satisfies \Bzero--\Bthree\
and the principal's manufacturing cost $c:\cl \Y \longmapsto \R$
is $\bs$-convex. 
Suppose further
that the convex domain $X_{y_\emptyset}=D_xb(X,y_\emptyset)$ has no $n-1$ dimensional
facets in its boundary. Then any minimizer $u \in \U_0$ of the principal's losses
\eqref{principal's reformulation} coincides with the reservation
utility on a set $U_0 := \{x \in \cl \X \mid
u(x)=b(x,y_\emptyset)-c(y_\emptyset)\}$ whose interior contains a positive
fraction of the agents. Such agents select the null product
$y_\emptyset$.
\end{theorem}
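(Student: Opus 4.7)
The plan is to argue by contradiction. Suppose $u \in \U_0$ minimizes \eqref{principal's reformulation} but $\mu(U_0^{\mathrm{int}}) = 0$; I will construct $u_\epsilon \in \U_0$ with $L(u_\epsilon) < L(u)$ for small $\epsilon > 0$, contradicting optimality. The candidate perturbation is
\[
u_\epsilon := \max(u - \epsilon, u_\emptyset).
\]
Both $u - \epsilon$ and $u_\emptyset$ are $b$-convex, and the pointwise supremum of $b$-convex functions is $b$-convex (dually, it corresponds to the infimum of the associated price menus under \eqref{b-transform}), so $u_\epsilon \in \V^b_{\cl\Y}$; since $u_\epsilon \ge u_\emptyset$ is automatic, $u_\epsilon \in \U_0$.

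Decompose $\cl X = I_\epsilon \sqcup U_\epsilon$ with $I_\epsilon = \{u > u_\emptyset + \epsilon\}$. On $I_\epsilon$, $u_\epsilon = u - \epsilon$ and $Du_\epsilon = Du$, so the integrand in \eqref{principal's reformulation} drops by exactly $\epsilon$. On $U_\epsilon$, $u_\epsilon = u_\emptyset$ a.e., so $Du_\epsilon = Du_\emptyset = D_x b(\cdot, y_\emptyset)$; Definition \ref{D:b-exp} then gives $y_b(x, Du_\epsilon) = y_\emptyset$, and the integrand collapses to $u_\emptyset + c(y_\emptyset) - b(x, y_\emptyset) \equiv 0$. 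Writing $\ell(x) := u(x) + c(y_b(x, Du)) - b(x, y_b(x, Du))$ for the original per-agent loss,
\[
L(u_\epsilon) - L(u) = -\epsilon\, \mu(I_\epsilon) - \int_{U_\epsilon} \ell \, d\mu.
\]
Under the contradiction hypothesis together with the Sobolev regularity of $f$ (which prevents $\mu$ from charging the Lipschitz hypersurface $\partial U_0$), we have $\mu(U_0) = 0$ and hence $\mu(I_\epsilon) \to 1$ as $\epsilon \downarrow 0$.

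The main obstacle is to show the residual $\int_{U_\epsilon} \ell \, d\mu = o(\epsilon)$, from which $L(u_\epsilon) - L(u) \sim -\epsilon < 0$ yields the contradiction. The semiconvexity of $u$ guarantees that at any differentiability point $x \in U_0$, since $u - u_\emptyset \ge 0$ attains its minimum there, $Du(x) = Du_\emptyset(x)$, hence $y_b(x,Du) = y_\emptyset$ and $\ell(x) = 0$; the Lipschitz and semiconvexity bounds \eqref{eq:lip semiconv} on $u$ then propagate this to a modulus-of-continuity estimate $\ell(x) \to 0$ as $x \to U_0$. A coarea decomposition in $g := u - u_\emptyset$, together with the $W^{1,1}$ regularity of $f$, converts these pointwise estimates into the desired integral bound of order $o(\epsilon)$. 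The no-facets hypothesis on $X_{y_\emptyset}$ enters precisely to control this coarea estimate: a facet of $\partial X_{y_\emptyset}$ would permit a full codimension-one portion of $\partial U_0$ along which the limiting gradients of $u$ from outside $U_0$ differ from $Du_\emptyset$ in a flat direction, producing a non-vanishing contribution to the level-set integrals and breaking the $o(\epsilon)$ bound. With these estimates in place one obtains $L(u_\epsilon) < L(u)$ for all small $\epsilon > 0$, contradicting optimality and concluding that $\mu(U_0^{\mathrm{int}}) > 0$; these interior agents have $Du = Du_\emptyset$ and so select the null product $y_\emptyset$ by Definition \ref{D:b-exp}.
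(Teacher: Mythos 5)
Your high-level strategy matches the paper's: the same perturbation $u_\epsilon := \max(u-\epsilon, u_\emptyset)$, the same decomposition of the gain into $-\epsilon\,\mu(I_\epsilon) - \int_{U_\epsilon}\ell\,d\mu$, and the same plan of showing the residual over $U_\epsilon$ is $o(\epsilon)$ to produce a contradiction. The computation of the gain formula is correct, and your observation that $Du = Du_\emptyset$ at interior differentiability points of $U_0$ is a genuine fact the paper uses implicitly.

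However, the technical heart of your argument — the $o(\epsilon)$ bound — is where the crucial ideas are missing. The paper first reparametrizes the agent space by $p = D_y b(x, y_\emptyset)$, landing on the convex domain $\cl X_{y_\emptyset}$. Under this change of variables and hypothesis \Bthree, the indirect utility $\tu(p)$ becomes a genuinely \emph{convex} function (by interchanging the roles of $x$ and $y$ in Lemma \ref{L:t-convex DASM}), so the sublevel sets $\tU_\epsilon = \{\tu \le \epsilon\}$ are \emph{convex} subsets of the convex body $X_{y_\emptyset}$. This convexity is not cosmetic: it is exactly what makes the divergence-theorem (or coarea) argument close. The paper bounds the profit on $\tU_\epsilon$ by a volume term $\epsilon \int_{\tU_\epsilon}|\nabla_p\cdot(\tf\,D_q\ta)|\,dp$ plus a surface term over $\tU_\epsilon \cap \partial X_{y_\emptyset}$; the no-facets hypothesis is then applied to the \emph{convex} set $\tU_0$: if $\tU_0$ has empty interior it has dimension $\le n-1$, and if it has dimension exactly $n-1$ then its relative interior cannot touch $\partial X_{y_\emptyset}$ without producing a facet. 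Hence $\dim(\tU_0 \cap \partial X_{y_\emptyset}) \le n-2$, killing the surface term. In your original coordinates, $U_0$ is \emph{not} convex, $\cl\X$ is not assumed convex, and none of this geometric control is available; your description of where the no-facets hypothesis enters ("a full codimension-one portion of $\partial U_0$ along which limiting gradients differ in a flat direction") does not match what the hypothesis actually provides and is not a precise enough claim to close the argument. Finally, your invocation of a "modulus-of-continuity estimate $\ell(x) \to 0$ as $x \to U_0$" is not correct: $\ell$ depends on $Du$, which is only defined a.e.\ and is not continuous, so pointwise control of $\ell$ near $U_0$ fails; what one actually needs is the convexity inequality $\ell(x) \ge g(x) + \langle D_q a(x, Du_\emptyset(x)), Dg(x)\rangle$ (from Proposition \ref{P:convexity}) plus integration by parts — and the integration by parts is precisely where the reparametrization and convexity become indispensable.
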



\begin{remark}[{\dummy Facets and} exclusion in different dimensions]
\label{R:facets}
A convex domain $X \subset \Rn$ fails to be strictly convex if it has line segments in its boundary.
These segments belong to facets of dimension $1$ or higher,  up to $n-1$ if the domain has a flat side
(meaning a positive fraction of its boundary coincides with a supporting hyperplane).  Thus strict
convexity of $X_{y_\emptyset}$ is sufficient for the hypothesis of the preceding theorem to be
satisfied --- except in dimension $n=1$.  In a single dimension,  every convex domain
$X \subset \R$ is an interval --- hence strictly convex --- whose endpoints form zero-dimensional
facets.  Thus Theorem \ref{T:Armstrong} is vacuous in dimension $n=1$,  which is consistent with
Armstrong's observation the necessity of exclusion is a hallmark of higher dimensions $n \ge 2$.
More recently,
Deneckere and Severinov \cite{DeneckereSeverinov09p} have argued that necessity of exclusion
is specific to the case in which the dimensions $m$ and $n$ of agent and product types coincide.
When $(m,n)=(2,1)$ they give necessary and sufficient conditions for the desirability of exclusion,
yielding a result quite different from ours in that exclusion turns out to be more frequently the
exception than the rule.
\end{remark}

{
\section{Discussion, extension, and conclusions}

The role of private information in determining market value
has a privileged place in economic theory,  acknowledged by the award
of the Nobel Memorial Prize in Economic Sciences to Mirrlees and Vickrey in 1996,
and to Akerlof, Spence and Stiglitz in 2001.  This phenomenon
has been deeply explored in the principal-agent framework,
where a single seller (or single buyer) transacts
business with a collection of anonymous agents.  In this context,
the private (asymmetric) information takes the form of a characteristic $x \in \X$
peculiar to each individual buyer which determines his preference
$b(x,y)$ for different products $y \in \Y$ 
offered by the principal;
$x$ remains concealed from the principal by anonymity of the buyer ---
at least until a purchase is made.  Knowing only the preference function
$b(x,y)$,  the statistical distribution $d\mu(x)$ of buyer types,
and her own manufacturing costs $c(y)$,
the principal's goal is to fix a price menu for different products which
maximizes her profits.

Many studies involving finite spaces of agent and product types $\X$ and $\Y$
have been carried out, including Spence's initial work on labour market signalling \cite{Spence73} and
Stiglitz paper with Rothschild on insurance \cite{RothschildStiglitz76}.  However
for a principal who transacts business with a one-dimensional continuum of agents $\X \subset \R$,
the problem was solved in Mirrlees' celebrated work on optimal taxation \cite{Mirrlees71},
and in Spence's study \cite{Spence74},  assuming the
contract types $y \in \Y \subset \R$ are also parameterized by a single real variable.
(For Mirrlees, $y \in \R$ represented the amount of labour an individual chooses to do
 facing a given tax schedule,
 while for Spence it represented the amount of education he chooses to acquire
 facing a given range of employment possibilities,
 $x \in \R$ being his intrinsic ability in both cases).
In the context of nonlinear pricing discussed above,
the one-dimensional model was studied by Mussa and Rosen \cite{MussaRosen78}.
The challenge of resolving the multidimensional version $\X,\Y \subset \Rn$
of this archetypal problem in microeconomic theory has been highlighted by
many authors \cite{McAfeeMcMillan88} \cite{Wilson91} \cite{Mirrlees96}
\cite{RochetStole03} \cite{Basov05}.
When only one side of the market displays multidimensional types,
analyses have been carried out by
Mirman and Sibley~\cite{MirmanSibley80}, Roberts \cite{Roberts79} and Spence \cite{Spence80},
who allow multidimensional products,  and by Laffont, Maskin and Rochet \cite{LaffontMaskinRochet87},
Araujo, Gottlieb and Moreira \cite{AraujoGottliebMoreira07},
and Deneckere and Severinov \cite{DeneckereSeverinov09p}
who model two-dimensional agents choosing from a one-dimensional product line.
When both sides of the market display multidimensional types,  existence of an
equilibrium has been established by Monteiro and Page \cite{MonteiroPage98}
and by Carlier \cite{Carlier01}, who employed a variational
formulation; see also the control-theoretic approach of Basov \cite{Basov01} \cite{Basov05}.
However, non-convexities have rendered the behaviour of this optimization problem
largely intractable \cite{GuesnerieLaffont78} ---
unless the preference function $b(x,y) = x \cdot G(y)$ is assumed
to depend linearly on agent type \cite{Wilson93} \cite{Armstrong96} \cite{RochetChone98}.
Moreover,  the presence of convexity typically depends on a correct choice of coordinates,
so is not always easy to discern.
The present study treats general Borel probability measures $\mu$ on $\X \subset \Rn$,
and provides a unified framework for dealing with discrete and continuous type spaces,
by invoking the tie-breaking rules of Remark \ref{R:tie-breaker} in case $\mu$
is discrete.  Assuming $b^*$-convexity of $c$, we
consider preferences linear in price
\eqref{indirect utility} (sometimes called quasilinear),
which satisfy a generalized Spence-Mirrlees single
crossing condition \Bzero-\Bone\ and appropriate convexity conditions on its domain
\Btwo, and we identify a criterion \Bthree\ equivalent
to convexity of the principal's optimization problem (Theorem \ref{T:convex set U}).
This criterion is a strengthening of Ma, Trudinger and Wang's necessary \cite{Loeper07p}
and sufficient \cite{MaTrudingerWang05} \cite{TrudingerWang07p} condition for
continuity of optimal mappings. Like all of our hypotheses,
it is independent of the choice of parameterization of agent and/or product types
--- as emphasized in \cite{KimMcCann07p}.
We believe the resulting convexity is a fundamental property which will eventually enable
a more complete theoretical and computational analysis of the multidimensional principal-agent problem,
and we indicate some examples of preference
functions which satisfy it in Examples \ref{E:linear}--\ref{E:convex product perturbation};
the bilinear example $b(x,y)= x \cdot y$ of Rochet and Chon\'e lies on the boundary of such
preference functions. If either the cross-curvature inequality
\Bthree\ holds strictly or the $b^*$-convexity of $c(y)$ is strict --- meaning
the {\em efficient} solution $y_{b,c}(x)$ depends continuously on $x \in\cl \X$ ---
we go on to derive uniqueness and stability of optimal strategies
(Theorem \ref{T:uniqueness} and its corollary).  Under mild additional hypotheses
we confirm that a positive fraction of agents must be priced out of the market
{\dummy when the type spaces are multidimensional} (Theorem~\ref{T:Armstrong}).  We conjecture that
non-negative cross-curvature \Bthree\ is likely to be necessary and sufficient for robustness
of Armstrong's desirability of exclusion \cite{Armstrong96} and the other bunching phenomena
observed by Rochet and Chon\'e \cite{RochetChone98}.

\begin{remark}[Maximizing social welfare under profitability constraints]\label{R:SWM under profitability constraint}
Before concluding this paper,  let us briefly mention an important class of related
models to which the same considerations apply:  namely,  the problem of maximizing the
expected welfare of the agents under a profitability constraint
 on the principal.  Such a model has been used by Roberts \cite{Roberts79}
to study energy pricing by a public utility, and explored by Spence \cite{Spence80} and
Monteiro and Page \cite{MonteiroPage98} in other contexts.  Suppose the
welfare of agent $x \in \X$ is given by a concave function $w(u(x))$ of his
indirect utility \eqref{indirect utility}.  Introducing a Lagrange multiplier
{\dummy $\lambda$ for the profitability constraint $L(u) \le 0$,  the problem of maximizing
the net social welfare over all agents becomes equivalent to the maximization
$$
W(\lambda) := \max_{u \in \U_0} -\lambda L(u) + \int_\X w(u(x)) d\mu(x)
$$
}for some choice of $\lambda \ge 0$.
Assuming \Bzero--\Bthree, and $b^*$-convexity of $c$,
for each $\lambda \ge 0$ this amounts to a concave maximization on a convex set,
as a consequence of Theorem \ref{T:convex set U}, Proposition \ref{P:convexity}
and the concavity of $w$.
Theorem \ref{T:uniqueness} and its corollary give hypotheses which guarantee
uniqueness and stability of its solution $u_\lambda$;  if the concavity of $w$ is strict,
we obtain uniqueness $\mu$-a.e.\ of $u_\lambda$ more directly under the weaker hypotheses of
Corollary \ref{C:convex program}.
{\dummy Either way,  once the uniqueness of $u_\lambda$ has been established,
standard arguments in the calculus of variations show the convex function $W(\lambda)$ to be
continuously differentiable, and that each value of its derivative
$W'(\lambda) = -L(u_\lambda)$ corresponds to a possibly degenerate interval
$\lambda \in [\lambda_1,\lambda_2]$ on which $u_\lambda$ is constant;  see
e.g.~Corollary 2.11 of \cite{CaffarelliMcCann99}. 
Uniqueness of a social welfare maximizing strategy subject to any
budget constraint in the range $]L(u_0),L(u_\infty)[$ is therefore established;
this range contains the vanishing budget constraint as long as $L(u_0)>0>L(u_\infty)$;
here $u_0$ represents the unconstrained maximizer whereas $u_\infty \in \U_0$ minimizes
the principal's losses \eqref{principal's reformulation}.}
All of our results --- except for the desirability of exclusion (Theorem \ref{T:Armstrong})
--- extend immediately to this new setting.  This sole exception is in accord with
the intuition that it need not be necessary to exclude any potential buyers
if one aims to maximize social welfare instead of the monopolist's profits.
\end{remark}
}

\section{Proofs}

Let us recall a characterization of non-negative cross-curvature
from Theorem 2.11 of \cite{KimMcCann08p}, inspired by Loeper's
characterization \cite{Loeper07p} of \AthreeW.
We recall its proof partly for the sake of completeness, but also
to extract a criterion for strong convexity.

\begin{lemma}[Characterizing non-negative cross-curvature \cite{KimMcCann08p}]
\label{L:t-convex DASM}
A preference function $b$ satisfying \Bzero--\Btwo\ is non-negatively cross-curved \Bthree\
if and only if for each $x,x_1 \in \cl \X$
\begin{equation}\label{convex difference}
q \in \cl \Y_x \longmapsto b(x_1,y_b(x,q)) - b(x,y_b(x,q))
\end{equation}
is a convex function.  If the preference function is positively cross-curved, 
then \eqref{convex difference} will be strongly convex (meaning its Hessian will be positive definite).
\end{lemma}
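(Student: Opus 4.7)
Fix $x,x_1\in\cl\X$ and a straight segment $q(t)=q_0+tv$ in $\cl\Y_x$; set $y(t):=y_b(x,q(t))$ and $y_0:=y(0)$, so by construction $t\mapsto D_xb(x,y(t))=q(t)$ is affine (``$y(\cdot)$ is a $b$-segment based at $x$''). By \Bone--\Btwo\ one can choose a curve $s\in[0,1]\mapsto x(s)$ from $x$ to $x_1$ along which $s\mapsto D_yb(x(s),y_0)$ is an affine segment in $\cl\X_{y_0}$ (``$x(\cdot)$ is a $b^*$-segment at $y_0$''). Setting $\phi(s,t):=b(x(s),y(t))$ and $f(t):=\phi(1,t)-\phi(0,t)$, the restriction of \eqref{convex difference} to the line $q(t)$ equals $f(t)$, so the goal is $f''(0)\ge 0$. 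Applying the fundamental theorem of calculus in $s$ twice (and swapping integration order) yields
\begin{equation*}
f''(0)=\phi_{stt}(0,0)+\int_0^1(1-r)\,\phi_{sstt}(r,0)\,dr.
\end{equation*}
Two affinity computations dispose of the boundary term and prepare the integrand: $\phi_s(0,t)=q(t)\cdot\dot x(0)$ is affine in $t$, so $\phi_{stt}(0,0)=0$; and $\phi_t(s,0)=D_yb(x(s),y_0)\cdot V$ with $V:=\dot y(0)$ is affine in $s$, so $\phi_{sst}(s,0)\equiv 0$ on $[0,1]$.

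The crux is showing $\phi_{sstt}(r,0)\ge 0$ for each $r\in[0,1]$. Denote by $K(x_0,y_0;p,w)$ the cross-curvature---the value of \eqref{MTW} when both affinity conditions of \Bthree\ are imposed at $(x_0,y_0)$ with velocities $(p,w)$; so $K\ge 0$ under \Bthree, and strictly when $p,w\neq 0$ under \BthreeU. Let $y_r$ be the $b$-segment at $x(r)$ with $y_r(0)=y_0$ and $\dot y_r(0)=V$. Expanding
\begin{equation*}
\phi_{tt}(s,0)=D^2_{yy}b(x(s),y_0)(V,V)+D_yb(x(s),y_0)\cdot\ddot y(0),
\end{equation*}
differentiating twice in $s$, and comparing with the analogous formula for $K(x(r),y_0;\dot x(r),V)$ (which uses $\ddot y_r(0)$ in place of $\ddot y(0)$), one obtains
\begin{equation*}
\phi_{sstt}(r,0)-K\bigl(x(r),y_0;\dot x(r),V\bigr)=\partial_s^2[D_yb(x(s),y_0)]\big|_{s=r}\cdot\bigl(\ddot y(0)-\ddot y_r(0)\bigr);
\end{equation*}
here the $(V,V)$-coefficient is identical in both expressions because our $x(\cdot)$ and the $b^*$-segment used to define $K$ at $(x(r),y_0)$ share the same 2-jet at $s=r$. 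The right-hand side vanishes because $s\mapsto D_yb(x(s),y_0)$ is affine, so $\phi_{sstt}(r,0)=K(x(r),y_0;\dot x(r),V)\ge 0$. Hence $f''(0)\ge 0$ under \Bthree, and strictly so (for $v\neq 0$ and $x\neq x_1$) under \BthreeU, yielding the desired strong convexity.

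For the converse, given arbitrary $(p,w)\in\Rn\times\Rn$ at $(x,y_0)$, take $v:=D_{xy}b(x,y_0)w$ (so $\dot y(0)=w$) and $x_1=x(\varepsilon)$ along a $b^*$-segment with $\dot x(0)=p$. Since $\phi_{sstt}(0,0)=K(x,y_0;p,w)$ (both affinity conditions hold at the basepoint), Taylor expansion in $\varepsilon$ of the identity above gives $f_\varepsilon''(0)=\tfrac{\varepsilon^2}{2}K(x,y_0;p,w)+O(\varepsilon^3)$, so convexity of \eqref{convex difference} for every $x_1$ forces $K\ge 0$ in every direction---exactly \Bthree. The principal obstacle throughout is the identification of the off-basepoint derivative $\phi_{sstt}(r,0)$ with a genuine cross-curvature tensor at $(x(r),y_0)$: because the $t$-curve $y$ is a $b$-segment only at $x$ and not at $x(r)$, this identification is not automatic, and its proof rests precisely on the cancellation of the acceleration discrepancy $\ddot y(0)-\ddot y_r(0)$ against the vanishing identity $\phi_{sst}\equiv 0$ supplied by the $b^*$-segment property of $x(\cdot)$.
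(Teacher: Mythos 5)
Your proof is correct, and structurally parallels the paper's: both integrate twice in the $s$-direction along a $b^*$-segment from $x$ to $x_1$ (the paper's display \eqref{s-segment} actually has a typo, $D_x$ for $D_y$ --- your $b^*$-segment is the intended object), use affinity of $q(\cdot)$ to annihilate the first-order term $\phi_{stt}(0,0)$, and reduce everything to the sign of the mixed fourth derivative $\phi_{sstt}(r,0)$. The genuine difference lies in how that sign is established: the paper invokes Lemma 4.5 of \cite{KimMcCann07p}, which asserts that \eqref{MTW} follows from \Bthree\ as soon as \emph{one} of the two curves is an affine segment, whereas you re-derive this fact inline by comparing with the $b$-segment $y_r$ based at $x(r)$ and observing that the discrepancy $\partial_s^2[D_y b(x(s),y_0)]|_{s=r}\cdot(\ddot y(0)-\ddot y_r(0))$ vanishes because $x(\cdot)$ is a $b^*$-segment. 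That makes your argument self-contained at the cost of a few extra lines; the substance is the same. Your converse (taking $x_1=x(\varepsilon)$ and Taylor-expanding) is an equivalent rephrasing of the paper's appeal to $g''(s_0)<0$ contradicting \eqref{MTW}. One small remark: the observation $\phi_{sst}(s,0)\equiv 0$ you record is never actually used; what does the work in your cancellation is the stronger, unprojected consequence of the $b^*$-segment property, namely $\partial_s^2 D_y b(x(s),y_0)\equiv 0$.
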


\begin{proof}
Fix $x,x_1 \in \cl\X$ and
set $q_t := (1-t)q_0 + t q_1$ and $f(\cdot,t):= b(\cdot,y_b(x,q_t)) - b(x,y_b(x,q_t))$
for $t\in[0,1]$.  Given $t_0 \in [0,1]$,  use \Bone--\Btwo\ to define the curve
$s \in [0,1] \longmapsto x_s \in \cl\X$ for which
\begin{equation}\label{s-segment}
D_x b(x_s, y_{t_0}) = (1-s) D_x b(x, y_{t_0}) + s D_x b(x_1, y_{t_0}),
\end{equation}
and set $g(s) = \frac{\partial^2 f}{\partial t^2}(x_s,t_0)$.
The convexity of \eqref{convex difference} will be verified by checking
$g(1) \ge 0$.  Let us start by observing $s \in [0,1] \longmapsto g(s)$
is a convex function,  as a consequence of property \Bthree\ and \eqref{s-segment};
(according to Lemma 4.5 of \cite{KimMcCann07p}, inequality \eqref{MTW} follows from \Bthree\
 whenever either of the two curves $s \in [0,1] \longmapsto (D_y b(x(s),y(0))$
 or $s \in [0,1] \longmapsto D_x b(x(0),y(s)))$ is a line segment).
We next claim $g(s)$ is minimized at $s=0$,  since
$$
g'(s) = \frac{\partial^2}{\partial t^2}\bigg|_{t=t_0} \langle D_x b(x_s,y_b(x_0,(1-t)q_0 + tq_1)), \dot x_s\rangle
$$
vanishes at $s=0$,  by the definition \eqref{b-exp} of $y_b$.
Thus $g(1) \ge g(0) =0$, establishing the convexity of \eqref{convex difference}.
If $b$ is positively cross-curved, then $g''(s)>0$ and the desired strong
convexity follows from $g(1)> g(0)=0$.

Conversely,  if the convexity of \eqref{convex difference} fails
we can find $x_1 \in \cl X$ and $s_0,t_0 \in [0,1]$ for which the construction above yields
$g''(s_0)<0$. In view of Lemma 4.5 of \cite{KimMcCann07p}, this provides a contradiction to
\eqref{MTW}.
\end{proof}

We shall also need to recall two basic facts about $b$-convex functions
from e.g.\ \cite{GangboMcCann96}:  any supremum of $b$-convex functions
is again $b$-convex,  unless it is identically infinite; and for each
$y \in \cl \Y$ and $\lambda \in \R$, the function
\begin{equation}\label{mountain}
x \in \cl \X \longmapsto b(x,y) - \lambda
\end{equation}
is $b$-convex.  Functions of the form either $y \in \cl \Y \longmapsto b(x,y) - \lambda$
or \eqref{mountain} are sometimes called {\em mountains} below.

\begin{proof}[Proof of Proposition \ref{P:convexity}]
The $\bs$-convexity of the manufacturing cost $c=(c^b)^\bs$ asserts
$$
c(y) = \sup_{x \in \cl \X} b(x,y) - c^b(x)
$$
is a supremum of mountains, whence
$$
a(q) := c(y_b(x,q)) - b(x,y_b(x,q)) = \sup_{x_0 \in \cl \X} b(x_0,y_b(x,q))-b(x,y_b(x,q)) - c^b(x_0)
$$
for all $x \in \cl X$ and $q \in \cl\Y_x$. According to Lemma \ref{L:t-convex DASM},
we have just expressed $a(q)$ as a supremum of convex functions, thus establishing
convexity of $a(q)$.  The functions under the supremum are strictly convex if \eqref{BthreeS} holds,
and strongly convex if $b$ is positively cross-curved,
thus establishing the strict or strong convexity of $a(q)$ under the respective hypotheses
\BthreeS\ and \BthreeU.

The remainder of the proof 
will be devoted to deducing strict convexity of $a(q)$ from strict
$\bs$-convexity of $c(y)$ assuming only \Bthree.
Recall that strict $\bs$-convexity was defined by continuity of the
agents' responses $y_{b,c}:\cl\X \longmapsto \cl \Y$
to the principal's manufacturing costs (as opposed to the prices
the principal would prefer to select).
Fix $x \in \cl \X$ and use the $C^3$ change of variables
$q \in \cl \Y_x \longmapsto y_b(x,q) \in \cl \Y$
to define $\tb(\cdot,q):= b(\cdot,y_b(x,q)) - b(x,y_b(x,q))$  and
$\tc (q) := c(y_b(x,q))-b(x,y_b(x,q)) = a(q)$.  As in \cite{FigalliKimMcCann},
it is easy to deduce that $\tb$ satisfies the same hypotheses \Bzero--\Bthree\ on
$\cl \X \times \cl \Y_x$ as the original preference function 
--- except for the fact that $\tb \in C^3$ whereas $b \in C^4$.  For the reasons
explained in \cite{FigalliKimMcCann} this discrepancy shall not trouble us here: we
still have continuous fourth derivatives of $\tb$ as long as at least one of the
four derivatives is with respect to a variable in $\cl \X$, and at most three
derivatives are with respect to variables in $\cl \Y_x$.
Note also that $\tc^\tb = c^b$
and the continuity of the agents' responses $y_{\tb,\tc}$ in the new variables follows from their
presumed continuity in the original variables, since $y_{\tb,\tc}(\cdot) = D_x c(x,y_{b,c}(\cdot))$.

The advantage of the new variables is that for each $x_0 \in \cl X$, the
mountain $q \in \cl \Y_x \longmapsto \tb(x_0,q)$ is a convex function,  according to
Lemma \ref{L:t-convex DASM}; (alternately, Theorem 4.3 of \cite{FigalliKimMcCann}).
To produce a contradiction, assume convexity of $\tc(q)$ fails to be strict,  so there is a segment
$t \in [0,1] \longmapsto q_t  \in  \cl \Y_x$ given by $q_t = (1-t)q_0 + tq_1$
along which $\tc$ is affine with the same slope $p \in \partial \tc(q_t)$ for each $t \in [0,1]$.
In fact, the compact convex set $\partial\tc(q_t)$ is independent of $t\in ]0,1[$,
so taking $p$ to be an extreme point of $\partial\tc(q_t)$ allows us to find a
sequence $q_{t,k} \in \Y_x \cap \dom D\tc$ converging to $q_t$
such that $p = \lim_{k \to \infty} D \tc(q_{t,k})$, by Theorem 25.6 of
Rockafellar \cite{Rockafellar72}.
On the other hand, $\bs$-convexity implies
$\tc(q)$ is a supremum of mountains: thus
to each $t\in [0,1]$ and integer $k$ corresponds some $x_{t,k} \in \cl \X$ such that
$(x_{t,k},q_{t,k}) \in \partial^\tbs \tc$, meaning
\begin{equation}\label{supporting}
\tc(q) \ge \tb(x_{t,k},q) - \tb(x_{t,k},q_{t,k}) + \tc(q_{t,k})
\end{equation}
for all $q \in \cl \Y_x$.  Since $q_{t,k} \in \dom D\tc$, saturation of this bound at $q_{t,k}$ implies
$D\tc(q_{t,k})= D_q \tb(x_{t,k},q_{t,k})$. Compactness of $\cl \X$ allows us to extract a subsequential
limit $(x_{t,k},q_{t,k}) \to (x_t,q_t) \in \partial^\tbs \tc$ satisfying $p=D_q \tb(x_t,q_t)$.
This first order condition shows the curve $t \in [0,1] \longmapsto x_t \in \cl \X$ to be
differentiable, with derivative
\begin{equation}\label{x-velocity}
\dot x_t = -D^2_{qx} \tb(x_t,q_t)^{-1}D^2_{qq} \tb(x_t,q_t) \dot q_t,
\end{equation}
by the implicit function theorem and \Bone.
On the other hand,  both $\tc(\cdot)$ and $\tb(x_t,\cdot)$ are convex functions of
$q \in \cl \Y_x$ in \eqref{supporting}, so both must be affine along the segment $q_t$.
This implies $\dot q_t = q_1-q_0$ is a zero eigenvector of $D^2_{qq} \tb (x_t,q_t)$,
which in turn implies $x_t = const$ from \eqref{x-velocity}.  On the other hand, the
efficient response $q_t = y_{\tb,\tc}(x_t)$ of agent $x_t$ to price menu $\tc$ is not constant,
since the endpoints $q_0 \ne q_1$ of the segment are distinct.
This produces the desired contradiction and establishes strict convexity of $\tc$.
\end{proof}

Combining Proposition \ref{P:convexity} with the following standard lemma
will allow us to establish our necessary and sufficient criteria for convexity of
the feasible set $\U_0$.

\begin{lemma}[Identification of supporting mountains]
\label{lemma:tangent}
 Let $u$ be a $b$-convex function on $\cl{\X}$. Assume $u$ is differentiable at $x_0\in \X$ and
 $D_x u(x_0) = D_x b(x_0, y)$ for some $y \in \cl{\Y}$.
 Then, $u(x) \ge m(x)$ for all $x \in \X$, where
 $m (\cdot) = b(\cdot, y) -b(x_0, y) + u(x_0)$.
\end{lemma}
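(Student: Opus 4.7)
The plan is to reinterpret the conclusion as the statement that the supremum defining $u^{\bs}(y)$ is attained at $x=x_0$. Indeed, the desired inequality $u(x) \ge b(x,y) - b(x_0,y) + u(x_0)$ is equivalent to
\[
b(x,y) - u(x) \le b(x_0,y) - u(x_0) \qquad \forall x \in \cl\X,
\]
which, in view of the definition \eqref{b-transform}, just says $u^{\bs}(y) = b(x_0,y) - u(x_0)$, i.e.\ $x_0$ attains the supremum in $u^{\bs}(y) = \sup_{x'} b(x',y) - u(x')$. So it suffices to identify the maximizer.

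First I would exploit $b$-convexity: since $u = (u^{\bs})^b$, we have $u(x_0) = \sup_{\tilde y \in \cl\Y} [b(x_0, \tilde y) - u^{\bs}(\tilde y)]$. Hypothesis \Bzero\ makes $b$ continuous on the compact set $\cl\X \times \cl\Y$, and a standard computation (using uniform Lipschitz bounds on the mountains) shows $u^{\bs}$ is itself continuous on $\cl\Y$. Therefore $\tilde y \mapsto b(x_0, \tilde y) - u^{\bs}(\tilde y)$ is continuous on the compact set $\cl\Y$, and its supremum is attained at some $y_* \in \cl\Y$. The resulting mountain
\[
f_{y_*}(x) := b(x, y_*) - u^{\bs}(y_*)
\]
then satisfies $f_{y_*}(x) \le u(x)$ for every $x \in \cl\X$ (directly from the definition of $u=(u^{\bs})^b$) with equality at $x=x_0$.

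Next, I would differentiate at $x_0$. Since $x_0 \in \X$ is an interior point at which $u$ is differentiable, and since $f_{y_*}$ is smooth (indeed $C^4$, by \Bzero), the fact that $u - f_{y_*}$ attains its minimum value $0$ at $x_0$ forces $Df_{y_*}(x_0) = Du(x_0)$, i.e.\
\[
D_x b(x_0, y_*) = D_x u(x_0) = D_x b(x_0, y).
\]
Now I invoke the bi-twist hypothesis \Bone: the map $y' \mapsto D_x b(x_0, y')$ is injective on $\cl\Y$, so $y_* = y$. This identifies $x_0$ as a maximizer in the supremum defining $u^{\bs}(y)$, which as noted at the start is equivalent to the claimed inequality $u \ge m$.

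I expect no serious obstacles: the entire argument is a first-order tangency argument combined with injectivity from \Bone. The only care needed is ensuring that the supremum in the double $b$-transform is actually attained (rather than merely approached along a sequence), which is why continuity of $u^{\bs}$ and compactness of $\cl\Y$ are invoked explicitly; this is the one routine technicality that should be stated rather than elided.
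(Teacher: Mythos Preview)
Your proof is correct and follows essentially the same approach as the paper: find a supporting mountain $b(\cdot,y_*)-u^{\bs}(y_*)$ touching $u$ at $x_0$ via the $b$-convexity $u=(u^{\bs})^b$, differentiate the touching condition to obtain $D_xb(x_0,y_*)=Du(x_0)=D_xb(x_0,y)$, and invoke the twist condition \Bone\ to conclude $y_*=y$. The only difference is cosmetic: you frame the conclusion as ``$x_0$ attains the supremum in $u^{\bs}(y)$'' and spell out the compactness/continuity argument for attainment, whereas the paper simply asserts existence of the supporting $y_0$ and rewrites $m(\cdot)=b(\cdot,y_0)-u^{\bs}(y_0)$ directly.
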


\begin{proof}
By $b$-convexity of $u$, there exists $y_0 \in \cl{\Y}$ such that
$u(x_0) = b(x_0, y_0) - u^{\bar b} (y_0)$ and also
$u(x) \ge b(x, y_0) -u^{\bar b}(y_0)$ for all $x \in \X$.
Since $u$ is differentiable at $x_0$, this implies $D_x u(x_0) = D_x b(x_0, y_0)$.
By the assumption \Bone, we conclude $y=y_0$. This completes the proof since
$m(\cdot) = b(\cdot, y_0) - u^{\bar b} (y_0)$.
\end{proof}

\begin{proof}[Proof of Theorem \ref{T:convex set U}]
Let us first show the sufficiency.
It is enough to show that for any two $b$-convex functions $u_0$ and $u_1$, the linear
combination $u_t:=(1-t)u_0 +tu_1$ is again $b$-convex, for each $0\le t \le 1$.
Fix $x_0 \in \cl{\X}$. Since $b$-convex functions are defined as suprema of mountains,
there exist $y_0$, $y_1 \in \cl{\Y}$ such that
$$
m^{x_0}_i (\cdot):=b(\cdot, y_i) -b(x_0 , y_i)    , \ \ \ i=0,1,$$ satisfy
$u_i(x) \ge m^{x_0}_i(x) +u_i(x_0)$ for all $x \in \overline{\X}$. Clearly
equality holds when $x=x_0$.
Let us consider the function
$$m^{x_0}_t(\cdot) = b(\cdot, y_t) -b(x _0, y_t) ,
$$
where $y_t$ defines a line segment
$$t \in [0,1] \longmapsto D_x b(x_0, y_t) = (1-t) D_x b(x_0, y_0) + t \, D _x b(x_0, y_1) \in \R^n.
$$
Note that (i) $m_t^{x_0}(x_0)=0$. We claim that (ii)  $u_t(\cdot) \ge m_t^{x_0}(\cdot) + u_t (x_0)$.
Notice that
$$
u_t(\cdot) \ge (1-t) m^{x_0}_0(\cdot) +t \, m^{x_0}_1(\cdot) + u_t (x_0) .
$$
Thus the claim follows from the inequality $(1-t)m^{x_0}_0 + t m^{x_0}_1 \ge m^{x_0}_t$,
which is implied by \Bthree\ according to Lemma \ref{L:t-convex DASM}.
The last two properties (i) and (ii) enable one to express $u_t$ as a supremum of mountains
$$
u_t (\cdot) =\sup_{x_0 \in \X} m^{x_0}_t (\cdot)+ u_t (x_0),
$$
hence $u_t$ is $b$-convex by the remark immediately preceding \eqref{mountain}.

Conversely, let us show the necessity of \Bthree\ for convexity of $\V^b_{\cl\Y}$.
Using the same notation as above, recall that each mountain $m^{x_0}_i$, $i=0,1$ is $b$-convex.
Assume the linear combination $h_t :=(1-t)m^{x_0}_0 + t\, m^{x_0}_1$ is $b$-convex.
Since $D_x h(x_0) = (1-t) D_x b(x_0, y_0) + t \,D_x b(x_0, y_1)) = D_x m_t (x_0)$, Lemma~\ref{lemma:tangent} requires that $m^{x_0}_t \le h_t $ for every $0\le t \le 1$.
This last condition is equivalent the property characterizing
nonnegative cross-curvature in Lemma \ref{L:t-convex DASM}.
This completes the proof of necessity and the proof of the theorem.
\end{proof}

Let us turn now to the convexity of the principal's problem.

\begin{proof}[Proof of Corollary \ref{C:convex program}]
Corollary \ref{C:convex program} follows by combining the convexity of the set $\U_0$
of feasible strategies proved in Theorem \ref{T:convex set U} with the convexity of $a(q)$ from
Proposition \ref{P:convexity}.  If $\mu$ fails to vanish on each Lipschitz hypersurface,
a little care is needed to deduce convexity of the principal's objective $L(u)$ from that of $a(q)$,
by invoking the conventions adopted in Remark \ref{R:tie-breaker} as follows.
Let $t \in[0,1] \longmapsto u_t = (1-t)u_0 + t u_1$ denote a line segment in the
convex set $\U_0$.  If $q \in \partial u_t(x)$ for some $x \in \X$, then
$y_b(x,q) \in \partial^b u_t(x)$ by Theorem 3.1 of Loeper \cite{Loeper07p};
(a direct proof along the lines of Lemma \ref{L:t-convex DASM} may be found in \cite{KimMcCann07p}).
So $y_b(x,q)$ is among the best responses of $x$ to price menu $v_t=u_t^\bs$.
For each $t\in[0,1]$ select $D u_t(x) \in \partial u_t(x)$ measurably
to ensure $\min \{c(y_b(x,q)) - b(x,y_b(x,q)) \mid q \in \partial u_t(x)\}$
is achieved at $q = Du_t(x)$.  Then
$a(Du_t(x)) \le a((1-t)Du_0(x) + tDu_1(x))$ since
$(1-t)Du_0(x) + t Du_1(x) \in \partial u_t(x)$.
The desired convexity of $L(u)$ follows.
\end{proof}


Next we establish uniqueness of the principal's strategy.

\begin{proof}[Proof of Theorem \ref{T:uniqueness}]
Suppose both $u_0$ and $u_1$ minimize the principal's net losses $L(u)$ on
the convex set $\U_0$. Define
the line segment $u_t = (1-t)u_0 + t u_1$ and --- in case
$\mu$ fails to vanish on each Lipschitz hypersurface ---
the measurable selection $D u_t(x) \in \partial u_t(x)$ as in the proof of
Corollary \ref{C:convex program}.
The strict convexity of $a(q)$ asserted by Proposition \ref{P:convexity}
removes all freedom from this selection.
Under the hypotheses of Theorem \ref{T:uniqueness},
the same strict convexity implies the contradiction
$L(u_{1/2}) < \frac{1}{2}L(u_0) + \frac{1}{2} L(u_1) = L(u_1)$ unless $Du_0 = Du_1$ holds $\mu$-a.e.
This establishes the uniqueness $\mu$-a.e.\ of the agents' equilibrium strategies
$y_{b,v}(x):=y_b(x,Du_1(x))$,
and of the principal's optimal measure $\nu := (y_{b,v})_\# \mu$ in \eqref{push-forward}.

Let $\spt \mu$ denote the smallest closed subset of $\cl X$ containing the full mass of $\mu$.
To identify $u_0=u_1$ on $\spt \mu$ and establish the remaining assertions is more technical.
First observe that
the participation constraint $u_{1/2}(x) \ge b(x,y_\emptyset) - c(y_\emptyset)=:u_\emptyset(x)$
on the continuous function $u_{1/2} \in \U_0$ must bind for some agent type $x_0 \in \spt \mu$;
otherwise for $\e>0$ sufficiently small, $\max\{u_{1/2} -\e,u_\emptyset\}$ would belong to $\U_0$
and reduce the principal's losses by $\e$, contradicting the asserted optimality of $u_{1/2}$.
Since $u_{1/2}$ is a convex
combination of two other functions obeying the same constraint,  we conclude $u_0(x_0)=u_1(x_0)$
coincides with the reservation utility $u_\emptyset(x_0)$ for type $x_0$.
Now use the map $y_{b,v} := y_b \circ Du_1$ from the first paragraph of the proof to
define a joint measure $\gamma := (id \times y_{b,v})_\#\mu$ given by
$\gamma[U \times V] = \mu[U \times y_{b,v}^{-1}(V)]$ for Borel $U \times V \subset \cl \X \times \cl\Y$,
and denote
by $\spt \gamma$ the smallest closed subset $S \subset \cl \X \times \cl\Y$ carrying
the full mass of $\gamma$.
Notice $\spt \gamma$ does not depend on $t \in[0,1]$,  nor in fact
on $u_0$ or $u_1$;  any other optimal strategy for the principal would lead to the same $\gamma$.

Since the graph of $y_{b,v}$ lies in the closed set $\partial^b u_1 \subset \cl \X \times \cl \Y$,
the same is true of $S := \{(x_0,y_\emptyset)\} \cup \spt \gamma$.  Thus $S$ is
$b$-cyclically monotone \eqref{b-cyclical monotonicity} by the result of Rochet \cite{Rochet87}
discussed immediately before Lemma \ref{L:minimal b-convex}.
Lemma \ref{L:minimal b-convex} then
yields a minimal $b$-convex function $u_-$ satisfying $u_-(x_0)=b(x_0,y_\emptyset)-c(y_\emptyset)$
for which $S \subset \partial^b u_-$.
The fact that $(x_0,y_\emptyset) \in S$ implies some mountain
$b(\cdot,y_\emptyset)+\lambda$ bounds $u_-(\cdot)$ from below with contact at $x_0$.
Clearly $\lambda=-c(y_\emptyset)$ whence $u_- \in \U_0$.

Now we have $u_i \ge u_-$ for $i =0,1$ with equality at $x_0$.
Also, $y_{b,v}(x) \in \partial^b u_-(x)$ for $\mu$ almost all $x$, whence $u_-$
must be an optimal strategy: it is smaller in value than $u_i$ and produces at least as favorable
a response as $u_i$ from almost all agents.
Finally since
$$
L(u_i)-L(u_-) \ge \int_X (u_i(x) - u_-(x)) d\mu(x) \ge 0,
$$
the fact that $u_i$ minimizes the losses of the principal implies the continuous integrand
vanishes $\mu$-almost everywhere.  Thus $u_i \ge u_-$ on $\cl \X$,  with equality holding
throughout $\spt \mu$ as desired.

Since $u_0$ was arbitrary,  we have now proved that
all optimal $u\in \U_0$ coincide with $u_1$ on $\spt \mu$.
Optimality of $u$ also implies $\spt \gamma \subset \partial^b u$;
if in addition the participation constraint
$u(x) \ge b(x,y_\emptyset)-c(y_\emptyset)$ binds at $x_0$, then $u \ge u_-$ on $\cl X$.
Although $u_-$ appears to depend on our choice of $x_0 \in \spt \mu$ in the construction above
this is not actually the case: $u(x_0) = u_1(x_0)$ shows the participation constraint binds
at $x_0$ for every optimal strategy and $u_-$ is therefore uniquely determined by its minimality among optimal
strategies $u \in \U_0$.

Now,  since any supremum of $b$-convex functions (not identically infinite)
is again $b$-convex,  define $u_+ \in \U_0$ as the pointwise supremum among all of the
principal's equilibrium strategies $u \in \U_0$.  The foregoing shows $u_+=u_-$ on $\spt \mu$,
while $(x,y) \in \spt \gamma \subset \partial^b u$ implies
\begin{eqnarray*}
u_+(\cdot) \ \ge\  u(\cdot)
&\ge& u(x) + b(\cdot,y) - b(x,y) \\
&=& u_+(x) + b(\cdot,y) - b(x,y)
\end{eqnarray*}
on $\cl\X$, whence $\spt \gamma \subset \partial^b u_+$.  From here we deduce
$L(u_+) \le L(u)$,  hence $u_+$ is itself an optimal strategy for the principal.

Finally, $v:\cl \Y \longmapsto \R \cup \{+\infty\}$ is an equilibrium price menu
in Carlier's reformulation \cite{Carlier01} if and only if $u:= v^b$ minimizes $L(u)$ on $\U_0$,
in which case $u_- \le u \le u_+$ throughout $\cl\X$ implies
$u_+^\bs \le (v^b)^\bs \le u_-^\bs$ throughout $\cl \Y$.
Moreover, $u_-=u_+$ on $\spt \mu$ implies $u_+^\bs = u_-^\bs$ on $\spt \nu$, since
$y_{b,v}(x) \in \partial^b u_\pm(x)$ for $\mu$-a.e.\ $x$ implies
$u_\pm^\bs(y_{b,v}(x)) = b(x,y_{b,v}(x)) - u_\pm(x)$.
We therefore conclude that if $v$ is an equilibrium price menu, then
$v \ge (v^b)^\bs \ge u_+^\bs$ on $\cl \Y$, with both equalities holding $\nu$-a.e.
Conversely, if $v:\cl \Y \longmapsto \R \cup \{+\infty\}$ satisfies $v \ge u_+^\bs$
with equality $\nu$-a.e.,
we deduce the same must be true for its $b$-convex hull
$(v^b)^\bs$,  the latter being the largest $b$-convex function dominated by $v$.
Thus $(v^b)^\bs(y_\emptyset) =c(y_\emptyset)$ and $v^b \in \U_0$ and $v^b \le u^+$ throughout $\cl\X$
with equality holding $\mu$-a.e.  If $\mu$ vanishes on Lipschitz hypersurfaces,
then $D v^b = Du^+$ agree $\mu$-a.e.,  so $L(v^b) = L(u^+)$ and $v^b$ is a optimal
strategy for the principal as desired.
If, on the other hand, $\mu$ does not vanish on all Lipschitz hypersurfaces,
then we may assume $v$ is its own $\bs$-convex hull by Remark \ref{R:tie-breaker}.
Any mountain which touches $u_+^\bs$ from below on $\spt \nu$ also touches
$v \ge u_+^\bs$ from below at the same point,  thus
$\partial^\bs u_+^\bs \subset \partial^\bs v$;
since $v$ is $b$-convex this is equivalent to
$\partial^b u_+ \subset \partial^b v^b$.  This shows the best response of $x$
facing price menu $u_+^\bs$ is also one of his best responses facing price menu $v$:
he cannot have a better response since his indirect utility $v^b \le u^+$.
The constraint on the agent's behaviour imposed by Remark \ref{R:tie-breaker}
now implies $L(v^b) \le L(u^+)$; equality must hold since $u^+$ is one
of the principal's optimal strategies.  This confirms optimality of $v^b$
and concludes the proof of the theorem.
\end{proof}

{
To show stability of the equilibrium 
requires the following convergence result concerning Borel
probability measures $\Prob\big(\cl X \times \cl Y\big)$ on the product space.

\begin{proposition}[Convergence of losses and mixed strategies]\label{P:convergence}
Suppose a sequence of triples $(b_\i,c_\i,\mu_\i) = \lim_{i \to\infty} (b_i,c_i,\mu_i)$
satisfy the hypotheses of Corollary \ref{C:stability}.
Let $L_i(u)$ denote the net losses \eqref{principal's reformulation}
by a principal who adopts strategy $u$ facing data
$(b_i,c_i,\mu_i)$.  If any sequence $u_i$ of $b_i$-convex
functions converge uniformly on $X$,  then their limit $u_\i$ is $b_\i$-convex and
$L_\infty(u_\infty) = \lim_{i \to \infty }L_i(u_i)$.  Furthermore,
there is a unique joint measure $\gamma_\i \in \Prob\big(\cl X \times \cl Y\big)$ supported
in $\partial^{b_\i} u_\i$ with left marginal $\mu_\i$,  and any
sequence of joint measures $\gamma_i \in \Prob\big(\cl X \times \cl Y\big)$
vanishing outside $\partial^{b_i} u_i$ and with left marginal $\mu_i$, must converge weakly-$*$ to
$\gamma_\infty$.
\end{proposition}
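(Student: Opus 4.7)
The plan is to identify the weak-$*$ limit points of the joint measures $\gamma_i$ by combining the closure of the $b$-subdifferential relation under uniform convergence, the twist hypothesis \Bone\ for uniqueness, and the $\mu_\infty$-a.e.\ differentiability of $u_\infty$; convergence of losses then follows by splitting
\begin{equation*}
L_i(u_i) = \int_{\cl X} u_i\, d\mu_i + \int_{\cl X \times \cl Y}\bigl[c_i(y) - b_i(x,y)\bigr]\, d\gamma_i(x,y),
\end{equation*}
where $\gamma_i$ is any admissible joint measure as in the hypothesis. First, $b_\infty$-convexity of the uniform limit $u_\infty$ follows formally: uniform convergence of $b_i \to b_\infty$ and $u_i \to u_\infty$ propagates to the Legendre-type transforms through the sup-bound $|u_i^{b_i^*}(y) - u_\infty^{b_\infty^*}(y)| \le \|b_i - b_\infty\|_\infty + \|u_i - u_\infty\|_\infty$, so passing the identity $u_i = (u_i^{b_i^*})^{b_i}$ to the limit gives $u_\infty \in \V^{b_\infty}_{\cl Y}$; the bound \eqref{eq:lip semiconv} then makes $u_\infty$ Lipschitz and semiconvex.

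Compactness of $\cl X \times \cl Y$ renders $\{\gamma_i\}$ weakly-$*$ precompact. Any weak-$*$ limit $\gamma$ has left marginal $\mu_\infty$ (test against continuous functions of $x$ alone). If $(x,y) \in \spt \gamma$, choose $(x_i,y_i) \in \spt\gamma_i \subset \partial^{b_i} u_i$ with $(x_i,y_i) \to (x,y)$, and pass the saturation identity $u_i(x_i) + u_i^{b_i^*}(y_i) = b_i(x_i,y_i)$ to the limit to obtain $(x,y) \in \partial^{b_\infty} u_\infty$. The crux is then uniqueness of such limits. Semiconvexity of $u_\infty$ places its set of non-differentiability inside a countable union of Lipschitz hypersurfaces, hence inside a $\mu_\infty$-null set by hypothesis. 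At any point $x$ of differentiability, the saturated inequality $u_\infty(z) + u_\infty^{b_\infty^*}(y) \ge b_\infty(z,y)$ at $z=x$ yields the first-order condition $Du_\infty(x) = D_x b_\infty(x,y)$, which by \Bone\ forces $y = y_{b_\infty}(x, Du_\infty(x)) =: G_\infty(x)$. Disintegrating $\gamma$ against $\mu_\infty$ therefore produces Dirac masses $\delta_{G_\infty(x)}$ at $\mu_\infty$-a.e.\ $x$, so $\gamma = \gamma_\infty := (id \times G_\infty)_\# \mu_\infty$. Combined with precompactness this forces $\gamma_i \rightharpoonup \gamma_\infty$ along the full sequence.

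The convergence of losses then drops out: uniform convergence $u_i \to u_\infty$ of uniformly Lipschitz functions together with $\mu_i \rightharpoonup \mu_\infty$ gives $\int u_i\, d\mu_i \to \int u_\infty\, d\mu_\infty$, while uniform convergence $c_i - b_i \to c_\infty - b_\infty$ together with $\gamma_i \rightharpoonup \gamma_\infty$ handles the second summand, delivering $\int[c_\infty(G_\infty(x)) - b_\infty(x,G_\infty(x))]\, d\mu_\infty(x)$. Adding the two recovers $L_\infty(u_\infty)$. The main obstacle is the uniqueness of the limit measure, which depends essentially on the interplay between semiconvexity of $b$-convex functions, the measure-theoretic hypothesis that $\mu_\infty$ ignores Lipschitz hypersurfaces, and the single-valued $b$-exponential afforded by the twist \Bone.
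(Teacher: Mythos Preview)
Your argument is correct and mirrors the paper's proof closely: continuity of the $b$-transform under uniform limits, weak-$*$ compactness, closure of $\partial^{b_i} u_i$ under limits, and uniqueness of the limit measure via the twist \Bone\ together with $\mu_\infty$-a.e.\ differentiability of $u_\infty$. The only wrinkle is your phrase ``any admissible joint measure'': when $\mu_i$ charges Lipschitz hypersurfaces the loss $L_i(u_i)$ is defined through the tie-breaking selection of Remark~\ref{R:tie-breaker}, so the decomposition $L_i(u_i) = \int u_i\, d\mu_i + \int (c_i - b_i)\, d\gamma_i$ holds only for the specific $\gamma_i = (id \times G_i)_\# \mu_i$ induced by that selection --- but this $\gamma_i$ is itself admissible (its support lies in $\partial^{b_i} u_i$, by the Loeper-type result invoked in the proof of Corollary~\ref{C:convex program}), so your convergence argument applies to it. The paper writes the loss equivalently as $\int (c_i - u_i^{b_i^*})\, d\gamma_i$, using $u_i + u_i^{b_i^*} = b_i$ on $\partial^{b_i} u_i$; your splitting is the same identity rearranged.
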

}

{
\begin{proof}
Assume a sequence $u_i \to u_\i$ of $b_i$-convex functions converges uniformly on $\cl\X$. 
Topologizing the continuous functions $C\big(\cl Z\big)$ by uniform convergence,
where $Z=\X,\Y$ or $\X \times \Y$,  makes the transformation
$(b,u) \longmapsto u^{b^*}$ given by \eqref{b-transform} continuous on
$C\big(\cl X \times \cl \Y\big) \times C\big(\cl \X\big)$.
This fact allows us
to take $i \to \infty$ in the relation $u_i^{b_i^*b_i}=u_i$
to conclude $b_\i$-convexity of $u_\i$.  From the semiconvexity
\eqref{eq:lip semiconv} of $u_\i$ we infer its domain
of differentiability $\dom Du_\i$ exhausts $\X$ apart from a countable collection
of Lipschitz hypersurfaces,  which are $\mu_\i$-negligible by hypothesis.
Define the map $G_\i(x) = y_{b_\i}(x,Du_\i(x))$ on $\dom Du_\i$.
Since $\partial^{b_\i} u_\i \cap (\dom Du_\i \times \cl \Y)$ coincides
with the graph of $G_\i$,  any measure $\gamma_\infty$ supported in
$\partial^{b_\i} u_\i$ with left marginal $\mu_\infty$ is given \eqref{joint push forward}
by $\gamma_\i := (id \times G_\i)_\#\mu_\i$ as in, e.g.,
Lemma 2.1 of Ahmad et al \cite{AhmadKimMcCann09p}.
This specifies $\gamma_\i$ uniquely.

Now suppose $\gamma_i \ge 0$ 
is a sequence of measures
supported in $\partial^{b_i} u_i$ having left marginal $\mu_i$.
Compactness allows us to extract from any subsequence
of $\gamma_i$ a further subsequence which converges weakly-$*$ to some limit
$\bar \gamma \in \Prob\big(\cl \X \times \cl\Y\big)$.  Since $\mu_i \rightharpoonup \mu_\i$
the left marginal of $\bar \gamma$ is given by $\mu_\i$.  Moreover,
since $u_i(x) + u_i^{b_i^*}(y) \ge b_i(x,y)$ throughout $\cl \X \times \cl \Y$
with equality on $\spt \gamma_i$, uniform convergence of this expression yields
$\spt \bar \gamma \subset \partial^{b_\i} u_\i$.  The uniqueness result of the preceding
paragraph then asserts $\bar \gamma = \gamma_\i$ independently of the choice of subsequence,
so the full sequence $\gamma_i \rightharpoonup \gamma_\infty$ converges weakly-$*$.

Finally, use the measurable selection $Du_i(x) \in \partial u_i(x)$
of Remark \ref{R:tie-breaker} to extend $Du_i(x)$ from $\dom Du_i$ to
$\X$ so as to guarantee that
$G_i(x) := y_{b_i}(x,Du_i(x))) \in \partial^{b_i}u_i(x)$.  Use the Borel map
$G_i:\X \longmapsto \cl\Y$ to push $\mu_i$ forward to the joint probability measure
$\gamma_i := (id \times G_i)_\#\mu_i$ on $\X \times \cl\Y$ defined by
\begin{equation}\label{joint push forward}
\gamma_i[U \times V] := \mu_i[U \cap G_i^{-1}(V)]
\end{equation}
for each Borel $U \times V \subset \X \times \cl\Y$.
Notice $\gamma_i$ is supported in $\partial^{b_i} u_i$
and has $\mu_i$ for its left marginal,  hence converges weakly-$*$ to $\gamma_\infty$.
Moreover, our choice of measurable
selection guarantees that the net losses \eqref{principal's reformulation}
of the principal choosing strategy $u_i$ coincide with
\begin{equation}\label{tlosses}
L_i(u_i) = \int_{X \times \cl \Y} (c_i(y) - u_i^{b_i^*}(y)) d \gamma_i(x,y).
\end{equation}
Weak-$*$ convergence of the measures $\gamma_i \rightharpoonup \gamma_\i$
couples with uniform convergence of the integrands to yield the desired limit
\begin{equation*}
\lim_{i\to \infty}L_{i}(u_{i})
= \int_{X \times \cl \Y} (c_\i(y) - u_\i^{b_\i^*}(y)) d \gamma_\i(x,y) = L_\i(u_\i)
\end{equation*}
and establish the proposition.
\end{proof}
}

{
\begin{proof}[Proof of Corollary~\ref{C:stability}]
Let $\U_{0}^i$ denote the space of $b_i$-convex functions
$u(\cdot) \ge b_i(\cdot,y_\emptyset)-c_i(y_\emptyset)$,
and $L_i(u)$ denote the net loss of the principal who chooses strategy $u$
facing the triple $(b_i, c_i, \mu_i)$.  The $L_i$-minimizing strategies
$u_i \in \U_0^i$ are Lipschitz and semiconvex, with upper bounds \eqref{eq:lip semiconv}
on $|Du_i|$ and $-D^2 u_i$ which are independent of $i$
since $\|b_i - b_\infty\|_{C^2} \to 0$.
The Ascoli-Arzel\`a theorem therefore yields a subsequence $u_{i(j)}$ which converges
uniformly to a 
limit $\bar u$ on the compact set $\cl \X$.
Since the functions $u_i$ have a semiconvexity constant independent of $i$,
it is a well-known corollary that their gradients also converge
$Du_{i(j)}(x) \to D\bar u(x)$ pointwise on the set of common differentiability
$(\dom D\bar u) \cap (\cap_{i=1}^\infty \dom D u_i)$.  This set exhausts
$\cl X$ up to a countable union of Lipschitz hypersurfaces --- which is
$\mu_\i$-negligible by hypothesis.
Setting $G_i(x) = y_{b_{i}}(x,D u_{i}(x))$, it is not hard to deduce
$y_{b_\i}(x,D \bar u(x)) = \lim_{j\to\infty} G_{i(j)}(x)$
on this set from Definition \ref{D:b-exp}.
If we can now prove $\bar u$ minimizes $L_\infty(u)$ on $\U_0^\infty$,
the uniqueness of equilibrium product 
selected  by $\mu_\i$-a.e.\ agent $x\in \X$ in Theorem \ref{T:uniqueness}
will then imply that $\lim_{j\to\infty} G_{i(j)}(x) = G_\infty(x)$ converges
to a limit independent of the subsequence chosen,
hence the full sequence $G_i(x)$
converges $\mu_\i$-a.e.

To see that $\bar u$ minimizes $L_\infty(u)$ on $\U_0^\infty$,
observe $u \in \U_0^\infty$ implies
$u^{b_\infty^*b_i} \in \U_0^i$ is $L_i$-feasible, being the $b_i$-transform of a
price menu $u^{b_\infty^*}(\cdot)$ agreeing with $c_\i(\cdot)$ at $y_\emptyset$.
Moreover, $u^{b_\infty^*b_i} \to u^{b_\infty^*b_\i}$ uniformly as $i \to \infty$
(by continuity of the $b$-transform asserted in the first paragraph of the preceding proof).
The optimality of $u_i$ therefore yields $L_{i}(u_{i}) \le L_i(u^{b_\infty^*b_i})$.
Proposition \ref{P:convergence} allows us to deduce
$L_\infty(\bar u) \le L_\infty(u)$ by taking the subsequential limit $j\to \infty$.
Since the same proposition asserts $b_\i$-convexity of $\bar u$,
we find $\bar u \in \U_0^\i$ is the desired minimizer after taking the limit $j \to \infty$
in $u_{i(j)}(\cdot) \ge b_{i(j)}(\cdot,y_\emptyset)-c_{i(j)}(y_\emptyset)$.
This concludes the proof of $\mu_\i$-a.e.\ convergence of the maps
$G_\infty(x) = \lim_{i\to \infty} G_i(x)$.


Turning to the optimal measures: as in the preceding
proof, a measurable selection $Du_i(x) \in \partial u_i(x)$ consistent with the tie-breaking
hypotheses of Remark \ref{R:tie-breaker}
may be used to extend the Borel map $G_i(x) = y_b(x,D u_i(x))$
from $\dom Du_i$ to $\X$ and define a joint measure
$\gamma_i := (id \times G_i)_\# \mu_i$ supported  on $\partial^{b_i} u_i$
as in \eqref{joint push forward}.
The left marginal of $\gamma_i$ is obviously given by $\mu_i$, and its right marginal
coincides with the unique optimal measure $\nu_i$ given by
Theorem~\ref{T:uniqueness}.  Proposition \ref{P:convergence} then yields weak-$*$
convergence of $\gamma_i \rightharpoonup \gamma_\infty$ and hence of
$\nu_i \rightharpoonup \nu_\infty$.  Theorem~\ref{T:uniqueness} also
asserts the two minimizers $u_\i = \bar u$ agree $\mu_\i$-a.e.  In this case the
uniform limit $\bar u$ is independent of the Ascoli-Arzel\`a subsequence,  hence we recover
convergence of the full sequence $u_i\to u_\i$ in $L^\infty(X,d\mu_\i)$ .
\end{proof}
}

Finally, let us extend Armstrong's desirability of exclusion to our model.
Our proof is inspired by Armstrong's \cite{Armstrong96},
but differs from his in a number of ways.

\begin{proof}[Proof of Theorem \ref{T:Armstrong}]  Use the
$C^3$-smooth diffeomorphism $x \in \cl \X \longmapsto p = D_y
b(x,y_\emptyset) \in \cl X_{y_\emptyset}$ provided by
\Bzero--\Btwo\ and its inverse $p \in \cl X_{y_\emptyset}
\longmapsto x = x_b(y_\emptyset,p) \in \cl \X$ to reparameterize
the space of agents over the strictly convex set $\cl
\X_{y_\emptyset}$. Then $\tu(p) :=
u(x_b(y_\emptyset,p))-b(x_b(y_\emptyset,p),y_\emptyset) + c(y_\emptyset)$ defines
a non-negative $\tb$-convex function, where
$\tb(p,y):=b(x_b(y_\emptyset,p),y) -
b(x_b(y_\emptyset,p),y_\emptyset) + c(y_\emptyset)$. In other words,  the space
$\U_0$ corresponds to the space $\tilde \U_0$ of non-negative
$\tb$-convex functions on $\cl X_{y_\emptyset}$ in the new
parameterization. This subtraction of the reservation utility from
the preference function does not change any agent's response to a
price menu $v$ offered by the principal,  since preferences
between different agent types are never compared. However, it does
make the preference function $\tb(p,y)$ a convex function of $p
\in \cl \X_{y_\emptyset}$, {\dummy as is easily seen}
by interchanging the roles of $x$ and
$y$ in Lemma \ref{L:t-convex DASM}. The indirect utility $\tu(p) =
v^\tb(p)$ is then also convex,  being a supremum
\eqref{b-transform} of such preference functions.

In the new variables,  the distribution of agents $\tf(p)dp = f(x)dx$ is given by
$\tf(p) = f(x_b(y_\emptyset,p)) \det[\partial x_b^i(y_\emptyset,p)/\partial p_j]$.
The principal's net losses $\tL (\tu)=L(u)$ are given as in \eqref{principal's reformulation} by
$$\tL(\tu)= \int_{X_{y_\emptyset}} \ta(D\tu(p),\tu(p),p) \tf(p) dp,$$
where
$\ta(q,s,p)=c(y_\tb(p,q)) - \tb(p,y_\tb(p,q)) + s$ is a convex function of
$q$ on ${\tY_p} := D_p \tb(p,\Y)$ 
for each fixed $p$ and $s$, according to Proposition \ref{P:convexity}; (recall that
$\tb \in C^3 \big(\cl X_{y_\emptyset}\times \cl\Y\big)$
satisfies the same hypotheses \Bzero--\Bthree\ as $b \in C^4 \big(\cl\X \times \cl\Y\big)$,
except for the possibitity that four continuous derivatives with respect to variables
in $X_{y_\emptyset}$ fail to exist, which is irrelevant as already discussed).  This
convexity implies
$$
\ta(q,s,p) \ge \ta(q_0,s,p) + \langle D_q \ta(q_0,s,p),q-q_0 \rangle
$$
for all $q,q_0 \in \cl \tY_p$.
With $p$ still fixed, the choice
$q_0 = D_p \tb(p,y_\emptyset) = \zero$
shows $\ta(\zero,s,p) =s$ whence
$\ta(q,s,p) \ge \langle D_q \ta(\zero,s,p), q \rangle$ for $s = \tu(x) \ge 0$.

Now suppose $\tu \in \tilde \U_0$ minimizes $\tL(\tu)$.
For $\e \ge 0$, define the continuously increasing
family of compact convex sets
$\tU_\e := \{p \in \cl X_{y_\emptyset} \mid \tu(p) \le \e\}$.
Observe that $\tU_0$ must be non-empty, since
otherwise for $\e>0$ small enough $\tU_\e$ would be empty,
and then $\tu-\e \in \tilde \U_0$ is a better strategy,
reducing the principal's losses by $\e$.
We now claim the interior of the set $\tU_0$ --- which corresponds to agents
who decline to participate --- contains a non-zero fraction of the total
population of agents.  Our argument is inspired by
the strategy Armstrong worked out in a special case \cite{Armstrong96},
which was to show that unless this conclusion is true,
the profit the principal extracts from agents in $\tU_\e$
would vanish at a higher order than $\e>0$,
making $\tu_\e := \max\{\tu - \e,0\} \in \tilde \U_0$ a better strategy
than $\tu$ for the principal when $\e$ is sufficiently small.

For $\e>0$, the contribution of $\tU_\e$ to the principal's profit is given by
\begin{eqnarray}\label{div thm profit bound}
-\tL_\e(\tu) &:=&  -\int_{\tU_\e} \ta(D\tu(p),\tu(p),p) \tf(p) dp
\cr &\le& -\int_{\tU_\e} \langle D_q \ta(\zero,\tu(p),p), D\tu(p) \rangle \tf(p) dp
\cr &=& \int_{\tU_\e} \tu(p) \nabla_p \cdot (\tf(p) D_q \ta(\zero,\tu(p),p) ) dp
  - \int_{\partial \tU_\e} \tu(p) \langle D_q \ta , \hat n \rangle \tf(p) dS(p)
\end{eqnarray}
where $\hat n = \hat n_{\tU_\e}(p)$
denotes the outer until normal to $\tU_\e$ at $p$,
and the divergence theorem has been used.
Here $\partial \tU_\e$ denotes the boundary of the convex set $\tU_\e$,
and $dS(p)$ denotes the $n-1$ dimensional surface (i.e.\ Hausdorff)
measure on this boundary.  {\dummy (For Sobolev functions, the integration by
parts formula that we need is contained in \S 4.3 of \cite{EvansGariepy92}
under the additional restriction that the vector field
$\tu(\cdot) D_q a(\zero, \tu(\cdot),\cdot)$ be $C^1$ smooth, but
extends immediately to Lipschitz vectors fields by approximation;  the
operation of restricting $\tf$ to the boundary of $\tU_\e$ is there shown to give a bounded
linear map from $W^{1,1}(U_\e,dp)$ to $L^1(\partial U_\e,dS)$ called the boundary {\em trace}.)}
As $\e \to 0$, we claim both integrals in \eqref{div thm profit bound}
vanish at rate $o(\e)$ if the interior of $\tU_0$ is empty.
To see this,  note $\tu=\e$ on $\partial \tU_\e \cap \intr X_{y_\emptyset}$,
so
\begin{eqnarray*}
&&\int_{\partial \tU_\e} \tu(p) \langle D_q \ta , \hat n \rangle
\tf(p) dS(p) \\
&=&\e \int_{\partial \tU_\e} \langle D_q \ta , \hat n
\rangle \tf(p) dS(p)
+ \int_{\partial \tU_\e \cap \partial X_{y_\emptyset}}
[\tu(p)-\e] \langle D_q \ta , \hat n \rangle
\tf(p) dS(p)\\
&=&\e \int_{\tU_\e} \nabla_p \cdot (\tf(p) D_q \ta(\zero,\tu(p),p)
) dp
+ \int_{{\dummy \tU_\e} \cap \partial X_{y_\emptyset}}
[\tu(p)-\e] \langle D_q \ta , \hat n \rangle \tf(p) dS(p).
\end{eqnarray*}
Since $0 \le \tu \leq \e$ in $\tU_\e$, we combine the last inequality with
\eqref{div thm profit bound} to obtain
\begin{equation}\label{ratio}
-\frac{\tL_\e(\tu)}{\e}
\leq 
\int_{\tU_\e}
\bigl|\nabla_p \cdot (\tf(p) D_q \ta(\zero,\tu(p),p) )\bigr| dp
+ \int_{\tU_\e \cap \partial X_{y_\emptyset}}
\bigl|\langle D_q \ta , \hat n \rangle \tf(p) \bigr| dS(p).
\end{equation}
{\dummy Notice that domain monotonicity implies the $\e \to 0$ limit of the last expressions above
is given by integrals over the limiting domain $\tU_0 = \cap_{\e>0} \tU_\e$.
Assume now the interior of the convex set $\tU_0$ is empty,
so that $\tU_0$ has dimension at most $n-1$. Then the volume} $|\tU_\e| =
o(1)$, hence the first integral in the right hand side
dwindles to zero as $\e\to 0$,
(recalling that  $\tu$ is Lipschitz, $\tf \in W^{1,1}$ and $\ta \in C^3$).
Concerning the second term,  {\dummy if the convex set $\tU_0$ has dimension $n-1$
then its {\em relative} interior}
must be disjoint from the boundary of the convex body $X_{y_\emptyset}$, since the
latter is assumed to have no $n-1$ dimensional facets.
{\dummy Either way} $\tU_0 \cap
\partial X_{y_\emptyset}$ has dimension at most $n-2$, which
implies that
$$
\int_{\tU_\e \cap \partial X_{y_\emptyset}} dS(p)=o(1)
$$
as $\e \to 0$.
All in all, we have shown $L_\e(\tu) = o(\e)$ as $\e \to 0$ whenever $\tU_0$ has empty interior,
which --- as was explained above --- contradicts the asserted optimality of the strategy $\tu$.
However, even if $\tU_0$ has non-empty interior,
more must be true to avoid inferring the contradictory conclusion $L_\e(\tu) = o(\e)$ as
$\e \to 0$ from \eqref{ratio}: 
one of the two limiting integrals
$$ 
\int_{\tU_0} |\nabla_p \cdot (\tf(p) D_q \ta(\zero,\tu(p),p) )|
dp>0 \quad {\rm or} \quad \int_{\tU_0\cap \partial
X_{y_\emptyset}} |\langle D_q \ta , \hat n \rangle| \tf(p) dS(p)
>0
$$ 
must be non-vanishing.
In either case, the $W^{1,1}$ density $\tf$ must be positive somewhere in $\tU_0$,
whose interior therefore includes a positive fraction of the agents.  Since $\tu$ is differentiable
with vanishing gradient on the interior of $\tU_0$,
there is no ambiguity in the strategy of these agents: they respond to $\tu$ by
choosing the null product.
\end{proof}

\appendix
\section{Minimal $b$-convex potentials}

The purpose of this appendix is to establish a mathematical result (and
some terminology) needed in the last part of the uniqueness proof, Theorem \ref{T:uniqueness}.
In particular, we establish a minimality property
enjoyed by Rochet's construction of a $b$-convex function for which
$\partial^b u$ contains a prescribed set \cite{Rochet87};  Rochet's construction is modeled
on the analogous construction by Rockafellar of a convex function $u$ whose
subdifferential $\partial u$ contains a given cyclically monotone set \cite{Rockafellar66}.

Recall a relation $S \subset \cl\X \times \cl\Y$ is
{\em $b$-cyclically monotone} if for each integer $k \in \N$
and $k$-tuple of points $(x_1,y_1), \ldots, (x_k, y_k) \in S$,
the inequality
\begin{equation}\label{b-cyclical monotonicity}
\sum_{i=1}^k b(x_i,y_i) - b(x_{i+1},y_{i}) \ge 0
\end{equation}
holds with $x_{k+1}:=x_1$.  For a function $u:\cl \X \longmapsto \R \cup \{+\infty\}$,
the relation $\partial^b u \subset \cl\X \times \cl\Y$ consists of those points $(x,y)$ such that
\begin{equation}\label{b-subdifferential}
u(\cdot) \ge u(x) + b(\cdot,y) - b(x,y)
\end{equation}
holds throughout $\cl \X$.  Rochet's generalization
of Rockafellar's theorem asserts that $S \subset \cl \X \times \cl \Y$ is
$b$-cyclically monotone if and only if there exists a $b$-convex function
$u:\cl X \longmapsto \R \cup \{+\infty\}$
such that $S \subset \partial^b u$; see also
\cite{GangboMcCann96} \cite{Levin99} \cite{Ruschendorf96}.
Here we need to extract a certain minimality property from its proof.

\begin{lemma}\label{L:minimal b-convex}
Given a $b$-cyclically monotone $S \subset \cl \X \times \cl \Y$  and
$(x_0,y_0) \in S$,  there is a $b$-convex function $u$ vanishing at $x_0$
and satisfying $S \subset \partial^b u$,  which is minimal in the sense that
$u \le \tu$ for all $\tu:\cl \X \longmapsto \R\cup\{+\infty\}$ vanishing at $x_0$
with $S \subset \partial^b \tu$.
\end{lemma}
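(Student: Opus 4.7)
The plan is to adapt the classical Rockafellar construction for cyclically monotone sets to the $b$-cyclically monotone setting, following Rochet. Specifically, I would define the candidate potential as the Rockafellar--Rochet supremum
\begin{equation*}
u(x) := \sup \left\{ \sum_{i=0}^{k-1}\bigl[b(x_{i+1},y_i) - b(x_i,y_i)\bigr] + b(x,y_k) - b(x_k,y_k) \right\},
\end{equation*}
where the supremum runs over $k \in \N$ and over finite sequences $(x_1,y_1),\ldots,(x_k,y_k) \in S$, with the anchor pair $(x_0,y_0)$ fixed at the prescribed element of $S$.

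First I would verify that $u$ is well-defined, finite, and $b$-convex. Applied to the cycle obtained by appending $(x_0,y_0)$ at the end of any chain, the $b$-cyclical monotonicity inequality \eqref{b-cyclical monotonicity} bounds the partial sums $\sum_{i=0}^{k-1}[b(x_{i+1},y_i)-b(x_i,y_i)]$ by $b(x_k,y_k)-b(x_0,y_k)$, so $u \le 2\|b\|_\infty$ by \Bzero. Each admissible chain contributes a function of $x$ of the form $b(x,y_k)+\mathrm{const}$, a mountain and therefore $b$-convex; invoking the remark preceding \eqref{mountain} on suprema of $b$-convex functions, $u$ is $b$-convex.

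Next I would confirm the two pointwise properties. The $k=0$ chain contributes $b(x_0,y_0)-b(x_0,y_0)=0$ at $x=x_0$, so $u(x_0)\ge 0$; conversely, any chain evaluated at $x=x_0$ amounts to the sum around the closed cycle $(x_0,y_0),(x_1,y_1),\ldots,(x_k,y_k),(x_0,y_0)$, which \eqref{b-cyclical monotonicity} forces to be nonpositive, giving $u(x_0)\le 0$. To check $S \subset \partial^b u$, fix $(x',y') \in S$ and $z \in \cl\X$; for each $\epsilon>0$ select a chain realizing $u(x')$ up to $\epsilon$, and append $(x',y')$ to obtain an admissible chain for $u(z)$ whose value exceeds $u(x')-\epsilon+b(z,y')-b(x',y')$. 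Sending $\epsilon\to 0$ yields the subdifferential inequality \eqref{b-subdifferential}.

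The minimality is the heart of the matter, but follows by a direct telescoping argument. Given any competitor $\tu:\cl\X\longmapsto \R\cup\{+\infty\}$ with $\tu(x_0)=0$ and $S \subset \partial^b \tu$, iteratively apply the inequality
\begin{equation*}
\tu(x_{i+1}) \ge \tu(x_i) + b(x_{i+1},y_i) - b(x_i,y_i),
\end{equation*}
valid since $(x_i,y_i) \in S \subset \partial^b \tu$, from $i=0$ up to $i=k-1$, and then once more from $x_k$ to an arbitrary evaluation point $x$; using $\tu(x_0)=0$ to absorb the base value produces exactly the expression inside the supremum defining $u(x)$, so $\tu(x) \ge u(x)$. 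The only step requiring care is the bookkeeping on the telescoping chain and the role of the fixed anchor $(x_0,y_0)$; no deeper technical obstacle arises, as all required regularity of $b$-convex suprema has already been invoked from earlier in the paper.
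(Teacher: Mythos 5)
Your proof is correct and follows essentially the same route as the paper: the same Rockafellar--Rochet supremum formula \eqref{Rochet-Rockafellar} (written in an equivalent re-indexed form), the same use of $b$-cyclical monotonicity to pin down $u(x_0)=0$, and the same telescoping chain argument for minimality. The only difference is that you spell out the verification of $S\subset\partial^b u$ and the finiteness bound in detail, whereas the paper delegates those to Rochet and instead pauses to note explicitly that $\tu(x_i)<\infty$ before telescoping; that last point is a small technicality you glossed over but which holds for the reason the paper indicates.
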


\begin{proof}
Given a $b$-cyclically monotone $S \subset \cl \X \times \cl \Y$  and
$(x_0,y_0) \in S$,  Rochet \cite{Rochet87} verified the elementary fact
that the following formula defines a $b$-convex function $u$ for which
$S \subset \partial^b u$:
\begin{equation}\label{Rochet-Rockafellar}
u(\cdot) = \sup_{k \in \N} \sup_{(x_1,y_1),\ldots,(x_k,y_k) \in S} b(\cdot, y_k) - b(x_0,y_0) + \sum_{i=1}^k b(x_i,y_{i-1}) - b(x_i,y_{i}).
\end{equation}
Taking $k=0$ shows $u(x_0) \ge 0$,  while the opposite inequality $u(x_0)\le 0$ follows
from $b$-cyclical monotonicity \eqref{b-cyclical monotonicity} of $S$.
Now suppose $\tu(x_0) = 0$ and $S \subset \partial^b \tu$.
For each $k\in \N$ and $k$-tuple in $S$,  we claim $\tu(\cdot)$ exceeds the expression under
the supremum in \eqref{Rochet-Rockafellar}.  Indeed,  $(x_i,y_i) \in S \subset \partial^b \tu$
implies
$$\tilde u(x_{i+1}) \ge \tilde u(x_i) + b(x_{i+1},y_i) - b(x_i,y_i).
$$
and $\tu(x_i)<\infty$,  by evaluating \eqref{b-subdifferential} at $x_{i}$ and at $x_0$.
Summing the displayed inequalities from $i=0,\ldots,k$, arbitrariness of $x_{k+1} \in \cl \X$
yields the desired result: $\tu(x_{k+1}) \ge u(x_{k+1})$.
\end{proof}





\begin{thebibliography}{10}

\bibitem{AhmadKimMcCann09p}
N.~{Ahmad, H.K. Kim, and R.J. McCann}.
\newblock Extremal doubly stochastic measures and optimal transportation.
\newblock {\em Preprint at {\tt www.math.toronto.edu/mccann}}.

\bibitem{AraujoGottliebMoreira07}
A.~Araujo, D.~Gottlieb and H.~Moreira.
\newblock A model of mixed signals with applications to countersignalling
\newblock {\em Rand J. Econom.} {\bf 38} (2007) 1020--1043.

\bibitem{Armstrong96}
M.~Armstrong.
\newblock Multiproduct nonlinear pricing.
\newblock {\em Econometrica} {\bf 64} (1996) 51--75.

\bibitem{Armstrong99}
M.~Armstrong.
\newblock Optimal regulation with unknown demand and cost function.
\newblock {\em J. Econom. Theory} {\bf 84} (1999) 196--215.

\bibitem{BaronMyerson82}
D.P. Baron and R.B. Myerson.
\newblock Regulating a monopolist with unknown costs.
\newblock {\em Econometrica} {\bf 50} (1982) 911--930.

\bibitem{Basov01}
S.~Basov.
\newblock Hamiltonian approach to multidimensional screening.
\newblock {\em J. Math. Econom.} {\bf 36} (2001) 77--94.

\bibitem{Basov05}
S.~Basov.
\newblock {\em Multidimensional Screening.}
\newblock Springer-Verlag, Berlin 2005.

\bibitem{Brenier91}
Y.~Brenier.
\newblock Polar factorization and monotone rearrangement of vector-valued
  functions.
\newblock {\em Comm. Pure Appl. Math.} {\bf 44} (1991) 375--417.

\bibitem{BrownSibley86}
S.J.~Brown and D.A.~Sibley.
\newblock The Theory of Public Utility Pricing.
\newblock Cambridge University Press, Cambridge (1986).

\bibitem{ButtazzoCarlier09p}
G.~Buttazzo and G.~Carlier.
\newblock Optimal spatial pricing strategies with transportation costs.
\newblock {\em {To appear in} Contemp. Math. (Israel Mathematical Conference
  Proceedings)}.

\bibitem{Caffarelli96}
L.~Caffarelli.
\newblock Allocation maps with general cost functions.
\newblock In P.~Marcellini et~al, editor, {\em Partial Differential Equations
  and Applications}, number 177 in Lecture Notes in Pure and Appl. Math., pages
  29--35. Dekker, New York, 1996.

\bibitem{CaffarelliMcCann99}
L.A.~Caffarelli and R.J.~McCann.
\newblock Free boundaries in optimal transport and {Monge-Amp\`ere} obstacle problems.
\newblock To appear in {\em Ann. of Math. (2)}

\bibitem{Carlier01}
G.~Carlier.
\newblock A general existence result for the principal-agent problem with
  adverse selection.
\newblock {\em J. Math. Econom.} {\bf 35} (2001) 129--150.

\bibitem{Carlier02}
G.~Carlier.
\newblock Calculus of variations with convexity constraints.
\newblock {\em J. Nonlinear Convex Anal.} {\bf 3} (2002) 125--143.

\bibitem{CarlierLachand-Robert01}
G.~Carlier and T.~Lachand-Robert.
\newblock Regularity of solutions for some variational problems subject to
  convexity constraint.
\newblock {\em Comm. Pure Appl. Math.} {\bf 54} (2001) 583--594.

\bibitem{DeneckereSeverinov09p}
R.~Deneckere and S.~Severinov.
\newblock {\em Multi-dimensional screening with a one-dimensional allocation space.}
\newblock Preprint at {\tt http://www.econ.ntu.edu.tw/sem-paper/97\underline{ }2/micro\underline{ }980319.pdf}.

\bibitem{EvansGariepy92}
L.C. Evans and R.F. Gariepy.
\newblock {\em Measure Theory and Fine Properties of Functions}.
\newblock Stud. Adv. Math. CRC Press, Boca Raton, 1992.

\bibitem{FigalliKimMcCann}
A.~Figalli, Y.-H. Kim, and R.J. McCann.
\newblock {Continuity and injectivity of optimal maps for non-negatively cross-curved costs}.
\newblock Preprint at {\tt www.arXiv.org:0911.3952}.

\bibitem{FigalliRifford08p}
A.~Figalli and L.~Rifford.
\newblock Continuity of optimal transport maps and convexity of injectivity domains
 on small deformations of the two-sphere.
\newblock {\em Comm. Pure Appl. Math.} {\bf 62} (2009) 1670--1706.

\bibitem{Gangbo95}
W.~Gangbo.
\newblock {\em Habilitation thesis}.
\newblock Universit\'e de Metz, 1995.

\bibitem{GangboMcCann96}
W.~Gangbo and R.J. McCann.
\newblock The geometry of optimal transportation.
\newblock {\em Acta Math.} {\bf 177} (1996) 113--161.

\bibitem{GoldmanLelandSibley84}
M.~Goldman, H.~Leland and D.~Sibley.
\newblock Optimal nonuniform pricing.
\newblock {\em Rev. Econom. Stud.} {\bf 51} (1984) 305--320.

\bibitem{GreenLaffont77}
J.~Green and J.J.~Laffont.
\newblock Characterization of satisfactory mechanisms for the revelation of preferences for public goods.
\newblock {\em Econometrica} {\bf 45} (1977) 427--438.

\bibitem{GuesnerieLaffont78}
R.~Guesnerie and J.-J. Laffont.
\newblock Taxing price makers.
\newblock {\em J. Econom. Theory} {\bf 19} (1978) 423--455.

\bibitem{KimMcCann07p}
Y.-H. Kim and R.J. McCann.
\newblock Continuity, curvature, and the general covariance of optimal
  transportation.
\newblock {\em {\rm Preprint at {\tt www.arXiv.org:0712.3077}. To appear
  in} J. Eur. Math. Soc. (JEMS)}.

\bibitem{KimMcCann08p}
Y.-H. Kim and R.J. McCann.
\newblock {Towards the smoothness of optimal maps on Riemannian submersions and
  Riemannian products (of round spheres in particular)}.
\newblock To appear in {\em J. Reine Angew. Math.}
Preprint at {\tt arXiv:math/0806.0351v1}.

\bibitem{LaffontMaskinRochet87}
J.-J. {Laffont, E. Maskin, and J.-C. Rochet}.
\newblock Optimal nonlinear pricing with two dimensional characteristics.
\newblock In R.~Radner T.~Groves and S.~Reiter, editors, {\em Information,
  Incentives, and Economic Mechanisms} pages 256--266. Minneapolis: University of Minnesota Press, 1987.



\bibitem{LaffontTirole93}
J.J.~Laffont and J.~Tirole.
\newblock {\em A Theory of Incentives in Regulation and Procurement.}
\newblock MIT Press, Cambridge MA (1993).

\bibitem{LeeLi09p}
P.W.Y. Lee and J. Li.
\newblock New examples on spaces of negative sectional curvature satisfying Ma-Trudinger-Wang conditions.
\newblock Preprint at {\tt www.arXiv.org:0911.3978}

\bibitem{LeeMcCann09p}
P.W.Y. Lee and R.J. McCann.
\newblock The Ma-Trudinger-Wang curvature for natural mechanical actions.
\newblock Preprint at {\tt www.math.toronto.edu/mccann}

\bibitem{Levin99}
V.L. Levin.
\newblock Abstract cyclical monotonicity and {Monge} solutions for the general
  {Monge-Kantorovich} problem.
\newblock {\em Set-valued Anal.} {\bf 7} (1999) 7--32.

\bibitem{LewisSappington88}
T.~Lewis and D.~Sappington.
\newblock Regulating a monopolist with unknown demand and cost functions.
\newblock {\em Rand. J. Econom.} {\bf 19} (1988) 428--457.


\bibitem{Loeper07p}
G.~Loeper.
\newblock On the regularity of maps solutions of optimal transportation
  problems.
\newblock {\em 
           Acta. Math.} {\bf 202} (2009) 241--283.

\bibitem{Loeper08p}
G.~Loeper.
\newblock {On the regularity of maps solutions of optimal transportation
  problems II. The sphere case and the reflector antenna}.
\newblock {\em {\rm To appear in} Arch. Ration. Mech. Anal.}

\bibitem{LoeperVillani08p}
G.~Loeper and C.~Villani.
\newblock Regularity of optimal transport in curved geometry: the non-focal
  case.
\newblock To appear in {\em Duke Math. J}.

\bibitem{MaTrudingerWang05}
X.-N. {Ma, N. Trudinger and X.-J. Wang}.
\newblock Regularity of potential functions of the optimal transportation
  problem.
\newblock {\em Arch. Rational Mech. Anal.} {\bf 177} (2005) 151--183.

\bibitem{MaskinRiley84}
E.~Maskin and J.~Riley.
\newblock Monopoly with incomplete information.
\newblock {\em Rand J. Econom.} {\bf 15} (1984) 171--196.

\bibitem{McAfeeMcMillan87}
R.P.~McAfee and J.~McMillan.
\newblock Competition for agency contracts.
\newblock {\em Rand J. Econom.} {\bf 18} (1987) 296--307.

\bibitem{McAfeeMcMillan88}
R.P. McAfee and J.~McMillan.
\newblock Multidimensional incentive compatibility and mechanism design.
\newblock {\em J. Econom. Theory} {\bf 46} (1988) 335--354.

\bibitem{McCann95}
R.J. McCann.
\newblock Existence and uniqueness of monotone measure-preserving maps.
\newblock {\em Duke Math. J.} {\bf 80} (1995) 309--323.

\bibitem{MirmanSibley80}
L.J. Mirman and D.S. Sibley.
\newblock Optimal nonlinear prices for multiproduct monopolies.
\newblock {\em Bell J. Econom.} {\bf 11} (1980) 659--670.

\bibitem{Mirrlees71}
J.A. Mirrlees.
\newblock An exploration in the theory of optimum income taxation.
\newblock {\em Rev. Econom. Stud.} {\bf 38} (1971) 175--208.

\bibitem{Mirrlees76}
J.A. Mirrlees.
\newblock Optimal tax theory: A synthesis.
\newblock {\em J. Public Econom.} {\bf 6} (1976) 327--358.

\bibitem{Mirrlees96}
J.A. Mirrlees.
\newblock Information and incentives: the economics of carrots and sticks.
\newblock {\em Econom. J.} {\bf 107} (1997) 1311--1329.

\bibitem{MonteiroPage98}
P.K. Monteiro and F.H.~Page Jr.
\newblock Optimal sellling mechanisms for multiproduct monopolists: incentive
  compatibility in the presence of budget constraints.
\newblock {\em J. Math. Econom.} {\bf 30} (1998) 473--502.

\bibitem{MussaRosen78}
M.~Mussa and S.~Rosen.
\newblock Monopoly and product quality.
\newblock {\em J. Econom. Theory} {\bf 18} (1978) 301--317.

\bibitem{Roberts79}
K.W.S. Roberts.
\newblock Welfare considerations of nonlinear pricing.
\newblock {\em Econom. J.} {\bf 89} (1979) 66--83.

\bibitem{Rochet85}
J.-C. Rochet.
\newblock The taxation principle and multitime Hamilton-Jacobi equations.
\newblock {\em J. Math. Econom.} {\bf 14} (1985) 113--128.

\bibitem{Rochet87}
J.-C. Rochet.
\newblock A necessary and sufficient condition for rationalizability in a
  quasi-linear context.
\newblock {\em J. Math. Econom.} {\bf 16} (1987) 191--200.

\bibitem{RochetChone98}
J.-C. Rochet and P.~Chon\'e.
\newblock Ironing, sweeping and multidimensional screening.
\newblock {\em Econometrica} {\bf 66} (1998) 783--826.

\bibitem{RochetStole03}
J.-C.~Rochet and L.A.~Stole.
\newblock The economics of multidimensional screening.
\newblock {\em Advances in Economics and Econometrics},
M. Dewatripont, L.P. Hansen, and S.J. Turnovsky, eds.,
pp 115--150, Cambridge University Press, Cambridge, 2003

\bibitem{Rockafellar66}
R.T. Rockafellar.
\newblock Characterization of the subdifferentials of convex functions.
\newblock {\em Pacific J. Math.} {\bf 17} (1966) 497--510.

\bibitem{Rockafellar72}
R.T. Rockafellar.
\newblock {\em Convex Analysis}.
\newblock Princeton University Press, Princeton, 1972.

\bibitem{RothschildStiglitz76}
M.~Rothschild and J.~Stiglitz.
\newblock Equilibrium in competitive insurance markets: an essay on the
  economics of imperfect information.
\newblock {\em Quarterly J. Econom.} {\bf 90} (1976) 629--649.

\bibitem{Ruschendorf91}
L.~R{\"u}schendorf.
\newblock Fr{\'e}chet bounds and their applications.
\newblock In {G. Dall'Aglio et al.}, editor, {\em Advances in Probability
  Distributions with Given Marginals}, number~67 in Math. Appl., pages
  151--187. Kluwer Academic Publishers, Dordrecht, 1991.

\bibitem{Ruschendorf96}
L.~R{\"u}schendorf.
\newblock On $c$-optimal random variables.
\newblock {\em Statist. Probab. Lett.} {\bf 37} (1996) 267--270.


\bibitem{Sei09p}
T.~Sei.
\newblock {A Jacobian inequality for gradient maps on the sphere and its
  application to directional statistics}.
\newblock {\em Preprint at {\rm arXiv:0906.0874v2}}.

\bibitem{Spence73}
M.~Spence.
\newblock Job market signaling.
\newblock {\em Quarterly J. Econom.} {\bf 87} (1973) 355--374.

\bibitem{Spence74}
M.~Spence.
\newblock Competitive and optimal responses to signals: An analysis of
  efficiency and distribution.
\newblock {\em J. Econom. Theory} {\bf 7} (1974) 296--332.

\bibitem{Spence80}
M.~Spence.
\newblock Multi-product quantity-dependent prices and profitability
  constraints.
\newblock {\em Rev. Econom. Stud.} {\bf 47} (1980) 821--841.

\bibitem{Trudinger06}
N.S. Trudinger.
\newblock {Recent developments in elliptic partial differential equations of
  Monge-Amp\`ere type}.
\newblock In {\em International Congress of Mathematicians}, volume~3, pages
  291--301. Eur. Math. Soc., {Z\"urich}, 2006.

\bibitem{TrudingerWang07p}
N.S. Trudinger and X.-J. Wang.
\newblock {On the second boundary value problem for Monge-Amp\`ere type
  equations and optimal transportation}.
\newblock {\em Ann. Sc. Norm. Super. Pisa Cl. Sci. (5)} {\bf 8} (2009) 1--32.

\bibitem{Villani03}
C.~Villani.
\newblock {\em Topics in Optimal Transportation}, volume~58 of {\em Graduate
  Studies in Mathematics}.
\newblock American Mathematical Society, Providence RI, 2003.

\bibitem{vonNeumannMorgenstern44}
J.~von Neumann and O.~Morgenstern.
\newblock {\em Theory of Games and Economic Behavior}
\newblock Princeton University Press, Princeton NJ, 1944.

\bibitem{Wilson91}
R.~Wilson.
\newblock Multiproduct Tariffs.
\newblock {\em J. Regulatory Econom.} {\bf 3} (1991) 5--26.

\bibitem{Wilson93}
R.~Wilson.
\newblock {\em Nonlinear Pricing}.
\newblock Oxford University Press, Oxford, 1993.


\end{thebibliography}
\end{document}